\documentclass[11pt]{amsart}

\usepackage{ae,aecompl}
\usepackage{esint}
\usepackage{graphicx}
\usepackage[all,cmtip]{xy}
\usepackage{amsmath, amscd}
\usepackage{booktabs}
\usepackage{amsthm}
\usepackage{amssymb}
\usepackage{amsfonts}
\usepackage{qsymbols}
\usepackage{latexsym}
\usepackage{mathrsfs}
\usepackage{cite}
\usepackage{color}
\usepackage{url}
\usepackage{enumerate}
\usepackage[english]{babel}

\usepackage{verbatim}
\usepackage[draft=false, colorlinks=true]{hyperref}

\usepackage{epsfig}
\usepackage{color}
\usepackage{epstopdf}

\usepackage{subfigure}
\usepackage{tikz}
\usepackage{tikz-3dplot}
\usetikzlibrary{arrows,automata}
\usepackage{varwidth}
\usepackage{tikz}
\usetikzlibrary{backgrounds,patterns,calc}
\usepackage{xparse}

\usetikzlibrary{calc, arrows.meta, decorations.pathreplacing}

\definecolor{ao}{rgb}{0, 0.5, 0}

\newcommand{\R}{\mathbb R}

\usepackage{tikz}
\usepackage{pgfplots}
\pgfplotsset{compat=1.13,
tick label style={color=white},
  label style={font=\small},
  legend style={font=\small}}

\usepackage{mathtools}

\allowdisplaybreaks

\newcommand{\mf}[1]{\mathfrak{#1}}

\newcommand {\C}{\mathbb C}

\newcommand {\ud}{\mathrm{d}}

\newcommand{\Ha}{\mathbb{H}^n}

\newcommand {\supp}{\mathrm{supp}}

\newcommand {\Z}{\mathbb Z}

\newcommand {\vanish}[1]{\relax}

\newtheorem{theorem}{Theorem}[section]
\newtheorem{lemma}[theorem]{Lemma}
\newtheorem{prop}[theorem]{Proposition}

\theoremstyle{definition}

\newtheorem{remark}[theorem]{Remark}

\numberwithin{equation}{section}
\protected\def\ignorethis#1\endignorethis{}
\let\endignorethis\relax

\title[Bilinear spherical maximal function]{Bilinear spherical maximal function on the Heisenberg group}

\keywords{Homogeneous group,
Heisenberg group, Lacunary maximal function, Weighted $L_p$ spaces}

\author[Ghosh]{Abhishek Ghosh}
\address[Abhishek Ghosh]{Department of Mathematics, Indian Institute of Technology Madras, Chennai, 600036, India.}

\email{abhi@iitm.ac.in, abhi170791@gmail.com}

\author[Singh]{Rajesh K. Singh}
\address[Rajesh K. Singh]{Department of Mathematics, Gitam Institute of Science, GITAM University, Visakhapatnam- 530045, A.P., India}
\email{rsingh4@gitam.edu}

\keywords{Bilinear spherical means, Heisenberg group, Bilinear spherical maximal function, Homogeneous group, Lacunary maximal function}

	\subjclass[2020]{Primary: 43A80, 42B25. Secondary: 22E25, 42B35}

\begin{document}

\begin{abstract}
We introduce the bilinear Nevo–Thangavelu spherical means on the Heisenberg group $\mathbb{H}^n,$ and derive $L^{p_1}(\mathbb{H}^n) \times L^{p_2}(\mathbb{H}^n) \to L^{p}(\mathbb{H}^n)$ estimates for the single-scale bilinear averaging operators, the (full) bilinear Nevo–Thangavelu maximal operator and finally for the bilinear lacunary maximal operator on $\mathbb{H}^n; n \geq 2$. Our result for the full maximal operator is sharp. The principal tools in our analysis include newly developed estimates for single-scale bilinear averages, Hopf's maximal ergodic theorem, and a $T^*T$ argument adapted to this setting.
\end{abstract}
	
\maketitle	


\section{Introduction}
An important pursuit in modern real-variable harmonic analysis is the study of averages over lower-dimensional manifolds. This goes back to the pioneering work by Stein in \cite{SteinSph} where he proved that the spherical maximal function, defined by, \[
\mathcal{S}f(x):=\sup_{r>0}|S_{r}f(x)|,\quad   S_{r}f(x):=\int_{S^{n-1}}f(x-ry)\, d \sigma_{n-1}(y), \quad x \in \mathbb R^n,
\]
(where  $d \sigma_{n-1}$ is the normalized surface measure on $S^{n-1}$) is bounded on $L^p(\R^n)$ if and only if $p>n/(n-1)$ for $n\geq 3.$ In dimension two, Bourgain \cite{Bourgain} resolved the problem by showing that the circular maximal function is bounded from $L^p(\R^2)$ to itself for $p>2.$  The lacunary analogue, known as the lacunary spherical maximal operator, is defined as $$\mathcal{S}_{\text{lac}}f(x)=\sup_{\ell\in \Z} |S_{2^{\ell}}f(x)|,$$ and was studied by Coifman--Weiss \cite{CW} and by C. Calder\'on in \cite{C}, and it was proved that $\mathcal{S}_{\text{lac}}$ maps $L^p(\R^n)$ to itself for $1<p\leq \infty.$ A more general framework was developed by Duoandikoetxea and Rubio de Francia in \cite{DR} relating the boundedness of singular maximal operators $M_{\mu}f(x):=\sup_{\ell\in \Z}|f\ast \mu_{\ell}|$ with the Fourier decay of the underlying measure $\mu.$ These prompted a great deal of research in this direction and study of these maximal operators is still an active area of research. Recently, in  \cite{HickmanJFA}, the work of  Duoandikoetxea and Rubio de Francia is extended to the vast generality of homogeneous groups wherein the Fourier decay is appropriately replaced by the \textit{Curvature assumption} of the singular measure $\mu$ at hand. Needless to mention that this goes back to the foundational work of Ricci--Stein in \cite{RStein}. Motivated by these works, in this article we introduce bilinear spherical averages on the Heisenberg group $\Ha$ and study the associated lacunary and full bilinear spherical maximal operator on the Heisenberg group. To provide context
and clearly state our results, we introduce certain preliminaries at this stage.  

\medskip
\noindent\textit{The bilinear Nevo--Thangavelu maximal function.}
 Let $ \mathbb{H}^n:= \mathbb{C}^n \times \mathbb{R} $ denote the $(2n + 1)$-dimensional Heisenberg group, equipped with the group law defined for $ x=(z, t), y=(w, s) \in \mathbb{H}^n $ by  
\begin{equation}
(z,t) \cdot (w,s) := \big(z + w, \, t + s + \textstyle\frac{1}{2}  \Im(  z \cdot \bar{w}) \big).
\end{equation}
Further, $\delta_{r}(z,t):=(rz,r^2t)$ denotes the one-parameter family of dilations on $\mathbb{H}^n$ for every $r>0.$ The Haar measure on $\Ha$ is the Lebesgue measure $dzdt,$ and $\Ha$ is an homogeneous space equipped with the left-invariant Kor\'anyi norm $|(z, t)|=(\|z\|^4+|t|^2)^{1/4},$ $\|z\|$ is the Euclidean norm on $\C^n.$ Also, we have $|B(a, r)|=c_{n} r^{Q},$ where $Q=(2n+2),$  denotes the \textit{homogeneous dimension} of $\Ha,$ and $d=(2n+1)$ will denote the \textit{topological dimension} of $\Ha.$ For $r>0,$ and $f,g\in C^{\infty}_0(\mathbb{H}^n)$, we define the bilinear Nevo--Thangavelu averages $\mathfrak{S}_{r}$ as
\begin{equation}
    \mathfrak{S}_{r}(f,g)(x)=\int_{S^{4n-1}}f(x. \delta_{r}(z_1, 0)^{-1}) g(x. \delta_{r}(z_2, 0)^{-1})\, d\sigma_{4n-1}(z_1, z_2),\, x\in \Ha,
\end{equation}
$d\sigma_{4n-1}$ represents the rotation invariant normalized surface measure on $S^{4n-1},$ and here on wards we simply write $d\sigma$ if there is no confusion. The associated lacunary and full bilinear spherical maximal functions are, respectively, defined by
\begin{equation*}
    \mathfrak{M}_{\text{lac}}(f,g)(x)=\sup_{\ell \in \mathbb{Z}}|\mathfrak{S}_{2^{\ell}}(f,g)(x)|,\quad \mathfrak{M}_{\text{full}}(f,g)(x)=\sup_{r>0} |\mathfrak{S}_{r}(f,g)(x)|.
\end{equation*}
The primary objective of this article is to study the $L^{p_{1}}(\Ha)\times L^{p_{2}}(\Ha)\to L^{p}(\Ha)$ bounds for $\mathfrak{M}_{\text{lac}}$ and $\mathfrak{M}_{\text{full}}.$ We now state the first main result of this article concerning the boundedness properties of the single-scale averaging operators $\mathfrak{S}_{r}.$

\begin{theorem} Let $n \geq 2,$ and $d=2n+1.$ Suppose $(\frac{1}{p_1}, \frac{1}{p_2})$ is contained in the region $\mathcal{D}$, which consists of the open pentagon  with the corners $O=(0,0)$, $A=(0,1)$, $E=(\frac{d-1}{d},1)$, $F=(1,\frac{d-1}{d})$, $D=(1,0)$, together  with the line segments $[O,A], [A,E), (F,D], [D,O]$.  Let $0<p \leq \infty$ obeying the H\"older relation $\frac{1}{p} = \frac{1}{p_1} + \frac{1}{p_2}$. Then, we have 
\begin{equation*}
   \|  \mathfrak{S}_{r}(f,g)\|_{L^{p}(\Ha)} \leq C \|f\|_{L^{p_{1}}(\Ha)} \|g\|_{L^{p_{2}}(\Ha)},
\end{equation*}  
uniformly for all $r>0.$
\label{single-scale-thm}
\end{theorem}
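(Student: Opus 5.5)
The plan is to first scale to $r=1$, then prove bounds at the vertices and edges of $\mathcal{D}$, and finally fill in the region by multilinear interpolation together with a localization argument for the quasi-Banach range $p<1$.

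\emph{Scaling.} Since $\delta_r$ is an automorphism of $\Ha$ with Jacobian $r^{Q}$, we have $\mathfrak{S}_r(f,g)=\mathfrak{S}_1(f\circ\delta_{r^{-1}},g\circ\delta_{r^{-1}})\circ\delta_r$. The Hölder relation $\frac1p=\frac1{p_1}+\frac1{p_2}$ makes the resulting powers of $r$ cancel, so it suffices to treat $r=1$.

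\emph{Estimates with $p\ge 1$.} We slice the sphere in the standard way:
\[
\int_{S^{4n-1}}F\,d\sigma=c_n\int_{|z_1|<1}(1-|z_1|^2)^{n-1}\int_{S^{2n-1}}F\big(z_1,\sqrt{1-|z_1|^2}\,\omega\big)\,d\sigma_{2n-1}(\omega)\,dz_1 .
\]
This gives the pointwise bound
\[
|\mathfrak{S}_1(f,g)(x)|\le c_n\int_{|z_1|<1}(1-|z_1|^2)^{n-1}\,|f(x\cdot(z_1,0)^{-1})|\,A_{\sqrt{1-|z_1|^2}}|g|(x)\,dz_1 ,
\]
where $A_s$ is the (linear) Nevo--Thangavelu spherical mean on $\Ha$. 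Two facts then enter. First, right translation preserves every $L^q$ norm because $\Ha$ is unimodular. Second, each $A_s$ is a contraction on all $L^q$, $1\le q\le\infty$. Minkowski's inequality followed by Hölder then gives boundedness whenever $p\ge1$, i.e. on the closed triangle $O,A,D$ (the segments $[O,A]$, $[O,D]$ and the region $\frac1{p_1}+\frac1{p_2}\le 1$). Alternatively, bounding $A_s|g|$ by the Nevo--Thangavelu maximal function gives a pointwise product bound. This is useful off the critical edges but does not reach the sharp vertices.

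\emph{The edges $[A,E)$ and $(F,D]$.} By symmetry in $(z_1,z_2)$ it suffices to prove the bound $L^{p_1}\times L^{1}\to L^{p}$ for $p_1>\frac{d}{d-1}$, where $p<1$. We localize: decompose $\Ha$ into Korányi balls $B_j$ of radius $1$. Since $\mathfrak{S}_1$ only moves points by a bounded amount, $\mathfrak{S}_1(f,g)\chi_{B_j}$ depends only on $f\chi_{B_j^*}$ and $g\chi_{B_j^*}$ for a fixed enlargement $B_j^*$. Hölder on $B_j$ reduces matters to the local estimate
\[
\|\mathfrak{S}_1(f,g)\|_{L^1(B_j)}\lesssim\|f\|_{L^{p_1}}\|g\|_{L^1}.
\]
The pieces are then summed using $\sum_j a_j^{p}b_j^{p}\le(\sum_j a_j^{p_1})^{p/p_1}(\sum_j b_j)^{p}$ together with finite overlap. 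For the local $L^1$ bound, we integrate in $x$ and substitute $u=x\cdot(z_2,0)^{-1}$:
\[
\int|\mathfrak{S}_1(f,g)|\le\int|g(u)|\int_{S^{4n-1}}|f(u\cdot(z_2,0)\cdot(z_1,0)^{-1})|\,d\sigma\,du .
\]
So we need $\sup_u\int |f(u\cdot\Phi(z_1,z_2))|\,d\sigma\lesssim\|f\|_{L^{p_1}}$, where $\Phi(z_1,z_2)=\big(z_2-z_1,-\tfrac12\Im(z_2\cdot\bar z_1)\big)$. This holds provided the pushforward $\Phi_*\sigma$, a compactly supported measure on the $d$-dimensional space $\Ha$, has a density in $L^{q}$ for every $q<d$, i.e. for $q=p_1'$ with $p_1>\frac{d}{d-1}$.

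\emph{Main obstacle.} The key step, which I expect to be hardest, is computing the density of $\Phi_*\sigma$ and its integrability exponent. I would use the coarea formula on $S^{4n-1}$ for the map $\Phi$. The critical set where $D\Phi$ drops rank is where $z_1,z_2$ are real-collinear, i.e. $z_2=\lambda z_1$ with $\lambda\in\R$. I would estimate the density near the image of this set and check that it blows up like $\mathrm{dist}^{-1}$ in a $d$-dimensional space, which gives $L^q$ for all $q<d$; this $n$-dependent count is where $n\ge2$ should enter. Once the edge estimates hold, bilinear (real or complex) interpolation for quasi-Banach targets, applied between the edge points, the corner $D$ (respectively $A$), and the $p\ge1$ region, yields the whole open pentagon $\mathcal{D}$ together with the listed boundary segments.
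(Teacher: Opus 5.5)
Your architecture is the paper's. You use Minkowski on the Banach triangle. The pairing identity reduces the edge bound $L^{p_1}\times L^1\to L^1$ to an $L^{p_1'}$ bound on the density of $\Phi_*\sigma$ (Proposition~\ref{single:improv}, with $f,g$ swapped). A Kenig--Stein localization passes from $L^1$ to $L^p$ with $\frac1p=\frac1{p_1}+1$ (Lemma~\ref{KS}), and multilinear interpolation finishes. There is a minor slip: the correct scaling identity is $\mathfrak{S}_r(f,g)=\mathfrak{S}_1(f\circ\delta_r,g\circ\delta_r)\circ\delta_{1/r}$, whereas yours describes $\mathfrak{S}_{1/r}$; this is harmless.

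The gap is the step you call the crux. You never show that $\Phi_*\sigma$ has density in $L^q$ for $q<d$, and the mechanism you sketch is wrong. With $\mathfrak a=z_1+z_2$ and $\mathfrak b=z_1-z_2$, one has exactly $\Phi=(-\mathfrak b,\frac14\Im(\mathfrak b\bar{\mathfrak a}))$. A direct computation shows that $D\Phi$ restricted to $TS^{4n-1}$ has full rank $2n+1$ unless $i\mathfrak b\in\mathbb{R}\mathfrak a$ or $\mathfrak a=0$, that is, unless $z_2=e^{i\theta}z_1$. Real-collinear pairs $z_2=\lambda z_1$ with $\lambda\neq\pm1$ are therefore regular points. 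Their image is the flat disc $\{t=0\}$ through the middle of the support, and away from the origin the density is smooth there. A $\mathrm{dist}^{-1}$ blow-up along a hypersurface would not even be locally integrable. So your count ``$\mathrm{dist}^{-1}$ in a $d$-dimensional space gives $L^q$, $q<d$'' only makes sense for a \emph{point} singularity.

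Here is what actually happens, and it is what the paper computes explicitly. Slice $S^{4n-1}$ in $\mathfrak b$. For fixed $\mathfrak b$, the variable $\mathfrak a$ runs over a sphere of radius $\rho=\rho(|\mathfrak b|)$ in $\mathbb{R}^{2n}$. After a rotation (using the symplectic matrix $J$), $\mathfrak a\mapsto\Im(\mathfrak b\bar{\mathfrak a})$ becomes $\rho|\mathfrak b|$ times a coordinate function. The pushforward of $\sigma_{2n-1}$ under that coordinate is $c_n(1-u^2)^{(2n-3)/2}\,du$ on $[-1,1]$. Rescaling, the density of $\Phi_*\sigma$ at $(\mathfrak b,s)$ is, up to constants,
\[
\frac{\rho^{2n-3}}{|\mathfrak b|}\Big(1-\frac{s^2}{c^2\rho^2|\mathfrak b|^2}\Big)^{\frac{2n-3}{2}}\chi_{\{|s|<c\rho|\mathfrak b|\}} .
\]
For $n\ge2$, both $\rho^{2n-3}$ and the fold factor are bounded. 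The fold factor comes from the true critical set $z_2=e^{i\theta}z_1$, which maps onto the boundary of the support. This, not a count near the real-collinear set, is where $n\ge2$ enters; for $n=1$ the exponents turn negative and the density is not even in $L^2$.

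What remains is a point singularity $\simeq|\mathfrak b|^{-1}$ at the origin, supported on the cone $|s|\lesssim|\mathfrak b|$. It is produced by the fibre $\{z_1=z_2\}$ over $0$, which has dimension $2n-1$, whereas generic fibres have dimension $2n-2$. Since $\int_0^1 r^{1-q}\,r^{2n-1}\,dr<\infty$ if and only if $q<2n+1=d$, the density lies in $L^q$ for all $q<d$. With this computation in place of your coarea heuristic, the rest of your argument goes through as written.
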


To provide the right context to the above result, we note that multilinear convolution operators of the form $$T_{n}(f_1, f_2,\cdots, f_{n})(x)=\int_{S^{n-1}}\prod_{i=1}^{n}f_{i}(x-z_{i})\, d\sigma(z_1, \cdots, z_{n}),\, x\in \R,$$
were studied by Oberlin \cite{Obe88}. We also note the interesting result by Shrivastava and Shuin in \cite{SS21} where they studied the operator $T_{n}$ for Banach range of indices. Recently, in \cite{IoseJGA} the authors studied Euclidean bilinear  spherical means
$$\mathcal{A}_{r}(f, g)(x)=\int_{S^{2n-1}}|f(x-ry)g(x-rz)|\, d\sigma_{2n-1}(y, z),$$
and proved that $\mathcal{A}_{r}$ maps $L^1(\R^n)\times L^1(\R^n)$ to $L^s(\R^n)$ with $s\in [1/2, 1],$ they also proved similar results for the triangle averaging operators. Our result Theorem~\ref{single-scale-thm} extends these to the Heisenberg group.

When it comes to the Euclidean bilinear spherical maximal operator $$\mathcal{M}(f, g)(x):=\sup_{r>0}|\mathcal{A}_{r}(f, g)(x)|,$$ there are many serious developments over the last couple of years. To start with, in \cite{IoseMRL} the authors proved that
$\mathcal{M}: L^2(\R^n)\times L^2(\R^n)$ to $L^2(\R^n),$ and subsequently, Barrionuevo \textit{et al} \cite{GraMRL} proved that $\mathcal{M}$ is bounded from $L^p(\R^n)\times L^q(\R^n)\to L^r(\R^n),$ where $\frac{1}{r}=\frac{1}{p}+\frac{1}{q}$ and $(\frac{1}{p}, \frac{1}{q})$ lies in the open quadrilateral with vertices $(0, 0),$ $(1, 0),$ and $(0, 1)$ and $(\frac{2n-10}{2n-5}, \frac{2n-10}{2n-5}),$ and was further improved in \cite{GHH21}, and in \cite{HHY20}. The main idea in \cite{GraMRL} was to use wavelet decomposition, however, very recently, Jeong and Lee obtained the sharp range of exponents proving that for $n\geq 2, 1\leq p, q\leq \infty$ and $0<r\leq\infty,$ the maximal function  $\mathcal{M}$ maps $L^p(\R^n)\times L^q(\R^n)$ to $L^r(\R^n),$ if and only if $r>\frac{n}{2n-1},$ $\frac{1}{r}=\frac{1}{p}+\frac{1}{q};$ except the points $(p, q, r)=(1, \infty, 1)\, \text{or}\, (\infty, 1, 1)$ where they obtained appropriate weak-type estimates. They opened a new paradigm for studying bilinear averages using the slicing method, we recall it here as it will be useful for our purpose later. The authors in \cite{LeeJFA} showed that for any continuous function $G$ on $\R^{2n}, n\geq 2,$ one can write
\begin{align}
\label{slicing-formula}
&\int_{S^{2n-1}} G(z_1, z_2)\,d\sigma_{2n-1}(z_1, z_2)\\
\nonumber&= \int_{B^{n}(0, 1)}\int_{S^{n-1}} G(z_1, \sqrt{1-\|z_1\|^2} z_2)\, d\sigma_{n-1}(z_2)(1-\|z_1\|^2)^{(n-2)/2}\, dz_1.    
\end{align}
Using this they controlled the maximal operator $\mathcal{M}(f, g)(x)$ by 
\begin{align}
\label{lee}
\min\{M_{\text{HL}}f(x)\mathcal{S}g(x), M_{\text{HL}}g(x)\mathcal{S}f(x)\}    
\end{align}
 and then an application of H\"older's inequality concludes their proof. For $n=1,$ the bilinear maximal operator $\mathcal{M}$ was studied by Christ and Zhou \cite{CZ24}, and by Dosidis and Ramos \cite{DR24}, we also refer \cite{BFO23} for sparse domination and weighted estimates. Now we state our main result for the full maximal operator $\mathfrak{M}_{\text{full}}.$

\begin{theorem}
\label{full}
Let $n\geq 2.$ Suppose $\big(\frac{1}{p_1}, \frac{1}{p_2}\big)$ is contained in the region $\mathcal{P}$, which consists of the open pentagon $\mathcal{P}$ with the corners $O=(0,0)$, $A=(0,1)$, $B=(\frac{d-2}{d-1},1)$, $C=(1,\frac{d-2}{d-1})$ and $D=(1,0)$, together with half-open line segments $[O,A)$ and $[O,D),$   see Figure \ref{fig: full max region}.  Let $0<p \leq \infty$ obeying the H\"older relation $\frac{1}{p} = \frac{1}{p_1} + \frac{1}{p_2}$. Then, we have 
\begin{equation}
   \|  \mathfrak{M}_{\mathrm{full}}(f,g)\|_{L^{p}(\Ha)} \leq C \|f\|_{L^{p_{1}}(\Ha)} \|g\|_{L^{p_{2}}(\Ha)}.
   \label{full:max}
\end{equation}
\end{theorem}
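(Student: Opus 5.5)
We adapt the Jeong--Lee slicing strategy to $\mathbb{H}^n$, with $S^{4n-1}\subset\C^n\times\C^n\cong\R^{4n}$ playing the role of $S^{2n-1}$. Apply the slicing formula \eqref{slicing-formula} in $\R^{4n}=\R^{2n}\times\R^{2n}$ to $G(z_1,z_2)=f(x\cdot\delta_r(z_1,0)^{-1})\,g(x\cdot\delta_r(z_2,0)^{-1})$. This gives
\begin{align*}
\mathfrak{S}_r(f,g)(x)=\int_{B^{2n}(0,1)} f(x\cdot\delta_r(z_1,0)^{-1})\, A_{r\sqrt{1-\|z_1\|^2}}\, g(x)\,(1-\|z_1\|^2)^{(2n-2)/2}\,dz_1,
\end{align*}
where $A_s g(x)=\int_{S^{2n-1}}g(x\cdot(sz_2,0)^{-1})\,d\sigma_{2n-1}(z_2)$ is the Nevo--Thangavelu spherical mean on $\Ha$. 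Taking absolute values and the supremum over the radius inside gives $|\mathfrak{S}_r(f,g)(x)|\le M_{NT}g(x)\int_{B^{2n}}|f(x\cdot\delta_r(z_1,0)^{-1})|\,dz_1$. Here $M_{NT}g=\sup_{s>0}|A_sg|$ is the Nevo--Thangavelu maximal function, and the weight is bounded since $n\ge2$.

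The remaining integral is an average of $|f|$ over the horizontal $2n$-dimensional ball $\{(z,0):\|z\|<r\}$, which is a singular (codimension-one) set in $\Ha$. Its supremum over $r$ is controlled by a maximal function $M_{\mathrm{hor}}f$. To bound $M_{\mathrm{hor}}$, write the ball average in polar coordinates as $\int_0^1 A_{rs}|f|\,s^{2n-1}\,ds\lesssim M_{NT}|f|$. So both factors are controlled by $M_{NT}$, and the symmetric bound also holds:
\[
\mathfrak{M}_{\mathrm{full}}(f,g)\lesssim \min\{M_{NT}f\,M_{NT}g,\ M_{\mathrm{hor}}f\,M_{NT}g,\ M_{NT}f\,M_{\mathrm{hor}}g\}.
\]
However, $M_{NT}$ is bounded on $L^p$ only for $p>\frac{2n}{2n-1}=\frac{d-1}{d-2}$, by Müller--Seeger and Narayanan--Thangavelu. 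This matches the vertices $B=(\frac{d-2}{d-1},1)$ and $C$: $\frac1{p_1}<\frac{d-2}{d-1}$ is precisely $p_1>\frac{d-1}{d-2}$. Near the edge $p_2=1$ we need $M_{\mathrm{hor}}$ bounded on $L^1$ or weak-$L^1$, which the $M_{NT}$ domination cannot give. The key extra input is therefore that the horizontal ball maximal function is bounded on $L^p$ for all $p>1$. This should follow from Hopf's maximal ergodic theorem (mentioned in the abstract), since $t\mapsto (tz,0)$ are one-parameter subgroups acting measure-preservingly. Concretely, average over directions $\omega\in S^{2n-1}$ of the one-dimensional flow maximal functions $\sup_r\frac1r\int_0^r|f(x\cdot(s\omega,0)^{-1})|\,ds$. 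The radial weight $s^{2n-1}\le r^{2n-1}$ lets us dominate the ball average by the direction-average of these flow averages. Each flow maximal operator is bounded on $L^p$, $p>1$, uniformly in $\omega$, and Minkowski then gives $L^p$ boundedness of $M_{\mathrm{hor}}$.

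With $M_{\mathrm{hor}}$ bounded on $L^{p_1}$ for $1<p_1\le\infty$ and $M_{NT}$ bounded on $L^{p_2}$ for $p_2>\frac{d-1}{d-2}$, Hölder's inequality gives \eqref{full:max} in the region $\{1/p_1<1,\ 1/p_2<\frac{d-2}{d-1}\}$, and symmetrically with the roles swapped. The union of these two rectangles is not yet the pentagon $\mathcal P$. To fill the triangular parts near the segment $BC$, use bilinear (Marcinkiewicz-type or multilinear Riesz--Thorin) interpolation for the sublinear maximal operator, linearized via a measurable choice $r(x)$. This requires that the endpoint estimates at points near $B$ and $C$ hold in restricted weak-type form. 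Since the target exponent $p$ may be below $1$, I would use interpolation for quasi-Banach targets.

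The segments $[O,A)$ and $[O,D)$ are included, and the bounds at $A$ and $D$ are excluded. This is consistent: at $A$, $p_1=\infty$ and $p_2=1$, and $M_{NT}$ fails on $L^1$.

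The main obstacle is the region strictly inside $\mathcal P$ but outside both rectangles, where one would like $p_1$ and $p_2$ simultaneously close to $1$. There, pure factorization fails, since it requires one factor in $M_{NT}$'s range. Interpolating between the corners $B$ and $C$, each obtained from the factorization as a (restricted weak) endpoint, and $O$ should cover the convex hull, which is exactly $\mathcal P$. Getting the endpoint estimates at $B$ and $C$ in a usable restricted weak form is the delicate step. I would first try restricted weak-type bounds for $M_{NT}$ at $p=\frac{2n}{2n-1}$, which are known in the Heisenberg setting, combined with $L^1\to L^{1,\infty}$ for $M_{\mathrm{hor}}$.
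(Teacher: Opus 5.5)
Your argument is essentially the paper's: slicing gives $\mathfrak{M}_{\mathrm{full}}(f,g)\lesssim\min\{M_S f\,\Lambda g,\ \Lambda f\,M_S g\}$ (your horizontal-ball maximal function, dominated by direction-averaged one-parameter flow maximal functions, is exactly how the Hopf-type bound for $\Lambda$ is obtained), followed by H\"older on the two rectangles and interpolation of the linearized operator. The restricted weak-type endpoints at $B$ and $C$ that you call the delicate step are not needed. Since $B$, $C$ and the segment $BC$ are all excluded from $\mathcal P$, interpolating the strong-type bounds at points of the open rectangles arbitrarily close to $B$ and $C$ already covers the whole triangle below $BC$.
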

A few words on the proof of Theorem~\ref{full} are in order. Here, we observe that one can use the slicing argument, but the presence of the Heisenberg group law and higher co-dimension makes it more complicated than the Euclidean counterpart and the appropriate replacement of \eqref{lee} involves the product of the Nevo--Thangavelu maximal operator $M_{S},$ defined by \eqref{NT}, and an ergodic maximal operator $\Lambda,$ defined by \eqref{ergodic-max}. This ergodic maximal operator made its appearance in earlier works of Nevo and Thangavelu in \cite{NeT} and of Narayanan--Thangavelu in \cite{NaT}. Another important feature is that the Theorem~\ref{full} is sharp. This is the content of the following result.
\begin{prop}
\label{nece:loc}
Let $n\geq 2.$ Let $1\leq p_1, p_2\leq \infty$ and $0<p<\infty.$ If we have \[\|\mathfrak{M}_{\mathrm{full}}(f, g)\|_{L^p(\Ha)}\leq C\|f\|_{L^{p_1}(\Ha)}\|g\|_{L^{p_2}(\Ha)}\] then we must have
\begin{align}
\label{Nece:2}
\frac{1}{p_1}+\frac{1}{p_2}\leq \frac{2d-3}{d}+\frac{1}{pd}.
\end{align}
In fact, we prove a stronger statement by showing that condition (\ref{Nece:2}) is also necessary for the boundedness of the local maximal function $\mathfrak{M}_{\mathrm{loc}},$ defined by 
$$\mathfrak{M}_{\mathrm{loc}}(f, g)(x) :=\sup_{1\leq r\leq 2}|\mathfrak{S}_{r}(f, g)(x)|.$$
\end{prop}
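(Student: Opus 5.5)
The plan is a Knapp/Stein-type counterexample in which both functions are indicators of the same small Euclidean box at the origin, chosen so that $\mathfrak{M}_{\mathrm{loc}}(f,g)$ stays large on a set of measure comparable to $\delta$. Since $\mathfrak{M}_{\mathrm{loc}}\le \mathfrak{M}_{\mathrm{full}}$, it suffices to treat $\mathfrak{M}_{\mathrm{loc}}$. Fix a small $\delta\in(0,1)$ and set
\[
f=g=\chi_{B_\delta},\qquad B_\delta:=\{(w,s)\in\Ha:\ \|w\|<\delta,\ |s|<\delta\}.
\]
Then $|B_\delta|\sim\delta^{d}$, so the right-hand side of the assumed inequality is $\sim\delta^{d(1/p_1+1/p_2)}$.

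\emph{Step 1 (the test set).} Let
\[
E_\delta:=\{x=(z,t):\ 1/\sqrt2\le \|z\|\le \sqrt2,\ |t|<c\delta\}
\]
for a small constant $c$, so that $|E_\delta|\sim\delta$. For $x\in E_\delta$ choose $r=\sqrt2\,\|z\|\in[1,2]$. With this choice the point $(z_1^0,z_2^0)=(z/r,z/r)$ lies on $S^{4n-1}$, because $2\|z\|^2/r^2=1$.

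\emph{Step 2 (the integrand is identically $1$ near this point).} Compute
\[
x\cdot\delta_r(z_j,0)^{-1}=\bigl(z-rz_j,\ t-\tfrac12 r\,\Im(z\cdot\bar z_j)\bigr).
\]
If $\|z_j-z/r\|<c'\delta$, then the first component has norm $<2c'\delta$. For the second component, $\Im(z\cdot\bar z)=0$ gives $|\Im(z\cdot \bar z_j)|=|\Im(z\cdot\overline{(z_j-z/r)})|\lesssim \delta$. Choosing $c,c'$ small makes the second component also less than $\delta$ in absolute value. Hence both factors $f$ and $g$ equal $1$ on the cap
\[
\Sigma_x:=\{(z_1,z_2)\in S^{4n-1}:\ |(z_1,z_2)-(z_1^0,z_2^0)|<c'\delta\}.
\]
This cap has $\sigma$-measure $\sim\delta^{4n-1}=\delta^{2d-3}$, uniformly in $x$. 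Since $f,g\ge0$, we get $\mathfrak{M}_{\mathrm{loc}}(f,g)(x)\ge\mathfrak{S}_r(f,g)(x)\gtrsim\delta^{2d-3}$ on $E_\delta$.

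\emph{Step 3 (conclusion).} The hypothesis gives
\[
\delta^{2d-3}\,\delta^{1/p}\lesssim \|\mathfrak{M}_{\mathrm{loc}}(f,g)\|_{L^p}\lesssim \delta^{d(1/p_1+1/p_2)}.
\]
Letting $\delta\to0$ forces $2d-3+\frac1p\ge d\bigl(\frac1{p_1}+\frac1{p_2}\bigr)$, which is exactly \eqref{Nece:2}. The endpoint cases $p_i=\infty$ need no change.

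The main point to verify is Step 2, namely that the Heisenberg twist in the $t$-variable costs only $O(\delta)$. This relies on the cancellation $\Im(z\cdot\bar z)=0$ at the centre of the cap. This cancellation is what allows the $t$-thickness of $E_\delta$ to be of order $\delta$ (giving the factor $\delta^{1/p}$) rather than forcing a smaller set. I would also double-check that $\Sigma_x$ is a full-dimensional cap of the $(4n-1)$-sphere of radius comparable to $\delta$, rather than a smaller slice; this holds because the $4n$-dimensional ball around $(z_1^0,z_2^0)$ is centred on the sphere.
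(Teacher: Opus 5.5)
Your proposal is correct and is essentially the paper's Knapp-type argument. The paper also takes $f=g$ to be the indicator of a $\delta$-ball at the origin, tests on $\{1/\sqrt2\le\|z\|\le 1/\sqrt2+\kappa,\ |t|<\delta\}$ with $r=\sqrt2\|z\|$, and obtains $\mathfrak{S}_r(f,g)\gtrsim\delta^{4n-1}$ there. The differences are cosmetic: the paper gets $\delta^{4n-1}$ from the slicing formula (a $\delta$-ball in $z_1$ times a $\delta$-cap in $z_2\in S^{2n-1}$) rather than from a single cap on $S^{4n-1}$. Your explicit use of $\Im(z\cdot\bar z)=0$ together with $|t|<c\delta$ makes precise the control of the $t$-twist, which the paper passes over quickly.
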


\begin{remark} Considering the scaling relation:
    $$\mf{S}_{rR}(f,g)= \mf{S}_{r} (f \circ \delta_{R}, g \circ \delta_{R}) \circ \delta_{1/R},$$
we get the H\"older's condition $\frac{1}{p}=\frac{1}{p_1}+\frac{1}{p_2}$ is necessary for \eqref{full:max}. However, for $\mf{M}_{\text{loc}}$  we further obtain a necessary condition $\frac{2}{p_1}+\frac{2}{p_2}\leq 1+\frac{d}{p}$ for the boundedness of $\mf{M}_{\text{loc}}$ from $L^{p_{1}}(\Ha)\times L^{p_2}(\Ha)\to L^p(\Ha),$ see Proposition~\ref{Nece:prop:2}.
\end{remark}

\begin{figure}[ht]
\centering
\begin{tikzpicture}[scale=5] 
    \draw[->] (-0.1,0) -- (1.2,0) node[right] {$\frac{1}{p_{1}}$};
    \draw[->] (0,-0.1) -- (0,1.2) node[above] {$\frac{1}{p_{2}}$};
    
    \draw[densely dotted] (0,1) -- (1,1) -- (1,0);
    \node at (1,-0.1) {$1$};
    \node at (-0.1,1) {$1$};

    \coordinate (O) at (0,0);
    \coordinate (A) at (0, 1);
    \coordinate (B) at (0.7,1);
    \coordinate (B proj) at (0.7,0);
    \coordinate (C) at (1, 0.7); 
    \coordinate (C proj) at (0, 0.7); 
    \coordinate (D) at (1,0); 

    \coordinate (A') at (0, 0.99);
    \coordinate (B') at (0.7,0.99);
    \coordinate (C') at (0.99, 0.7); 
    \coordinate (D') at (0.99,0);

    \fill[gray!17] (O) -- (A') -- (B') -- (C') -- (D') -- cycle;
    \fill[gray!17] (O) -- (A) -- (B) -- (B proj) -- (O) -- cycle;
     \fill[gray!17] (O) -- (C proj) -- (C) -- (D) --  (O) -- cycle;


\draw[thin, gray! 40] (0.7,0) -- (0.7,1);
\draw[thin, gray! 40] (0,0.7) -- (1,0.7);

    \draw[thick] (O) -- (0,0.99) node[midway, above left, black, font=\tiny] {}; 
    \draw[densely dotted, thick] (0.01, 1) -- (0.69,1) node[midway, right, black, font=\tiny] {};
    \draw[densely dotted, thick] (B) -- (C) node[midway, below right, black, font=\tiny] {};
    \draw[densely dotted, thick] (1, 0.69) -- (1,0.01) node[midway, right, black, font=\tiny] {};
     \draw[thick] (0.99,0) -- (O) node[midway, right, black, font=\tiny] {};
    
     \node[above right] at (A) {$A$};
     \node[above] at (B) {$B=\textstyle (\frac{d-2}{d-1}, 1)$};
     \node[right] at (C) {$C=\textstyle (1,\frac{d-2}{d-1})$};
      \node[above right] at (D) {$D$};
    \node[below left] at (O) {$O$};
    
    \fill (O) circle (0.4pt);
    \draw (A) circle (0.4pt);
    \draw (B) circle (0.4pt);
    \draw (C) circle (0.4pt);
    \draw (D) circle (0.4pt);
    
    \node at (0.50, 0.50) {$\mathcal{P}$};
\end{tikzpicture}
\caption{$\mathfrak{M}_{\mathrm{full}}$-boundedness region $\mathcal{P}$ which is the union of open pentagon with vertices $O,A,B,C,D$ and the line segments $[O,A)$ and $[O,D)$.}
\label{fig: full max region}
\end{figure}
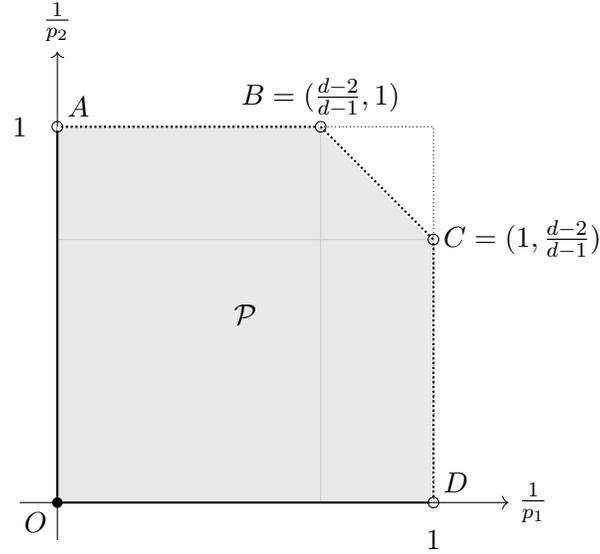
Building further upon the works of Jeong and Lee \cite{LeeJFA}, Borges and Foster \cite{BF24} studied the lacunary Euclidean bilinear maximal operator $$\mathcal{M}_{\text{lac}}(f, g)(x):=\sup\limits_{\ell\in \Z} \mathcal{A}_{2^{\ell}}(f, g)(x),$$ and proved that $\mathcal{M}_{\text{lac}}(f, g): L^p(\R^n)\times L^q(\R^n) \rightarrow L^r(\R^n)$ for $1<p, q\leq \infty$ and $0<r\leq \infty$ satisfying the H\"older relation on $n\geq 2$. They also studied the lacunary bilinear triangle averaging operator in \cite{BF24}.  Now we state our main results for the lacunary maximal operator $\mathfrak{M}_{\text{lac}}.$


\begin{figure}[ht]
\centering
\begin{tikzpicture}[scale=5] 
    \draw[->] (-0.1,0) -- (1.2,0) node[right] {$\frac{1}{p_{1}}$};
    \draw[->] (0,-0.1) -- (0,1.2) node[above] {$\frac{1}{p_{2}}$};
    
    \draw[densely dotted] (0,1) -- (1,1) -- (1,0);
    \node at (1,-0.1) {$1$};
    \node at (-0.1,1) {$1$};

    \coordinate (O) at (0,0);
    \coordinate (A) at (0, 1);
    \coordinate (B) at (0.7,1);
    \coordinate (C) at (1, 0.7); 
    \coordinate (Bl) at (0.85,1);
    \coordinate (Bs) at (0.84,1);
    \coordinate (Cl) at (1, 0.85); 
    \coordinate (Cs) at (1, 0.84); 

    \coordinate (D) at (1,0); 

    \coordinate (A') at (0, 0.99);
    \coordinate (B') at (0.7,0.99);
    \coordinate (C') at (0.99, 0.7); 
     \coordinate (Bl') at (0.85,0.99);
    \coordinate (Cl') at (0.99, 0.85); 
    \coordinate (D') at (0.99,0);

    \fill[gray!17] (O) -- (A) -- (Bs) -- (Cs) -- (D) -- cycle;


\draw[thin, gray! 40] (0.7,0) -- (0.7,1);
\draw[thin, gray! 40] (0,0.7) -- (1,0.7);
\draw[thin, gray! 40] (0.85,0) -- (0.85,1);
\draw[thin, gray! 40] (0,0.85) -- (1,0.85);

    \draw[thick] (O) -- (A) node[midway, above left, black, font=\tiny] {}; 
    \draw[densely dotted, thick] (A) -- (B) node[midway, right, black, font=\tiny] {};
    \draw[densely dotted, thick] (B) -- (Bs) node[midway, right, black, font=\tiny] {};
    \draw[densely dotted, thick, gray! 50] (B) -- (C) node[midway, below right, black, font=\tiny] {};
        \draw[densely dotted, thick] (0.86,0.99) -- (0.99,0.86) node[midway, below right, black, font=\tiny] {};
    \draw[densely dotted, thick] (Cs) -- (C) node[midway, right, black, font=\tiny] {};
    \draw[densely dotted, thick] (C) -- (D) node[midway, right, black, font=\tiny] {};
     \draw[thick] (D) -- (O) node[midway, right, black, font=\tiny] {};
    
     \node[above right] at (A) {$A$};
     \node[circle] at (0.6,1.06) { \textcolor{gray}{ \scalebox{0.7}{$  B= (\frac{d-2}{d-1}, 1)$} } };
\node[circle] at (1, 1.07) {$E=\textstyle (\frac{d-1}{d}, 1)$};
     \node[circle] at (1.25,0.83) {$F=\textstyle (1,\frac{d-1}{d})$};
     \node[circle] at (1.19,0.65) {\textcolor{gray}{  \scalebox{0.7}{$  C=(1,\frac{d-2}{d-1})$}    }  };
      \node[above right] at (D) {$D$};
    \node[below left] at (O) {$O$};
    
    \fill (O) circle (0.4pt);
     \draw (A) circle (0.4pt);
    \draw (B) circle (0.4pt);
    \draw (C) circle (0.4pt);
    \draw (Bl) circle (0.4pt);
    \draw (Cl) circle (0.4pt);
    \draw (D) circle (0.4pt);
    
    \node at (0.50, 0.50) {$\mathcal{R}$};
\end{tikzpicture}
\caption{$\mathfrak{M}_{\mathrm{lac}}$-boundedness region $\mathcal{R}$ which is the union of open pentagon with vertices $O,A,E,F,D$ and line segments $[O,A)$ and $[O,D)$.
}
\label{fig: lac max region}
\end{figure}
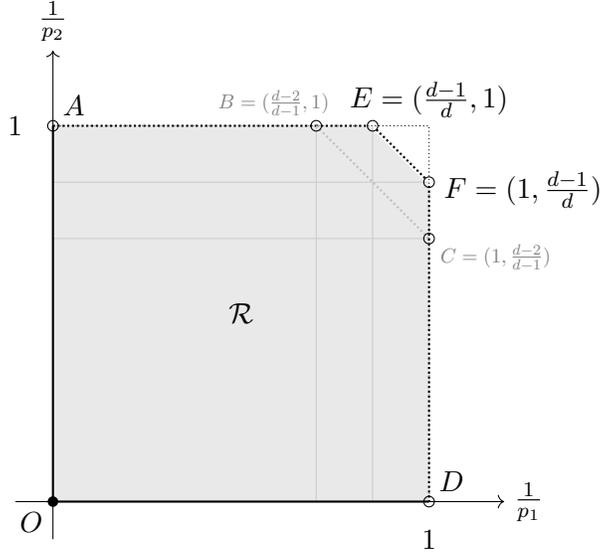

\begin{theorem}\label{Horizontal lac result}
Let $n \geq 2$. Suppose $(\frac{1}{p_1}, \frac{1}{p_2})$ is contained in  the region $\mathcal{R}$, 
which consists of the open pentagon with the corners $O=(0,0)$, $A=(0,1)$, $E=(\frac{d-1}{d},1)$, $F=(1,\frac{d-1}{d})$ and $D=(1,0)$, 
 together with the line segments $[O,A), (D,O]$,  see Figure \ref{fig: lac max region}. Let $0<p \leq \infty$ obeying the H\"older relation $\frac{1}{p} = \frac{1}{p_1} + \frac{1}{p_2}$. Then, we have
\begin{equation*}
  \|  \mathfrak{M}_{\mathrm{lac}}(f,g)\|_{L^{p}(\Ha)} \leq C \|f\|_{L^{p_{1}}(\Ha)} \|g\|_{L^{p_{2}}(\Ha)}.
\end{equation*}
\end{theorem}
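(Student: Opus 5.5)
Our plan is to treat the interior of the pentagon with a slicing decomposition combined with a Duoandikoetxea--Rubio de Francia type argument on $\Ha$. We then recover the edges $[O,A)$ and $(D,O]$ separately, and finally interpolate.

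\emph{Slicing.} Applying the slicing formula \eqref{slicing-formula} on $\R^{4n}=\C^n\times\C^n$, we write
\[
\mathfrak{S}_r(f,g)(x)=\int_{B^{2n}(0,1)} f(x\cdot\delta_r(z_1,0)^{-1})\, S_{r\sqrt{1-\|z_1\|^2}}g(x)\,(1-\|z_1\|^2)^{n-1}\,dz_1 ,
\]
where $S_s g(x)=\int_{S^{2n-1}} g(x\cdot(sz_2,0)^{-1})\,d\sigma(z_2)$ is the Nevo--Thangavelu spherical mean. Bounding $|S_sg|$ pointwise is too lossy for the lacunary problem, since it only sees the full maximal function $M_S$ and therefore only reaches the smaller region $\mathcal{P}$. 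We therefore proceed in two steps.

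\emph{Step 1: the edges.} Take the point $(0,1/p_2)$, i.e.\ $f\in L^\infty$. Then $|\mathfrak{S}_{2^\ell}(f,g)|\le \|f\|_\infty\, \mathfrak{S}_{2^\ell}(1,|g|)$. By slicing in the other variable, $\mathfrak{S}_{2^\ell}(1,|g|)$ is the average of $|g|$ against a radial, absolutely continuous measure $\mu_{2^\ell}$ on the ball $B^{2n}(0,2^\ell)\times\{0\}$. The lacunary maximal operator of these measures is dominated by the ergodic-type maximal operator $\Lambda$ of \eqref{ergodic-max}. It is also dominated by the lacunary Nevo--Thangavelu operator, via a radial decomposition, and that operator is bounded for $1<p\le\infty$ by the results of \cite{HickmanJFA}. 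This gives the segment $[O,A)$, and symmetrically $(D,O]$. Note that at $p_2=1$ only a weak bound is expected, which is why $A$ and $D$ are excluded.

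\emph{Step 2: the diagonal part near $E,F$ via square functions.} Let $\phi_\ell$ be a smooth Heisenberg approximate identity at scale $2^\ell$. We decompose
\[
\mathfrak{S}_{2^\ell}(f,g)=\mathfrak{S}_{2^\ell}(f,g*\phi_\ell)+\sum_{j\ge1}\mathfrak{S}_{2^\ell}(f,g*\psi_{\ell+j}),
\]
with $\psi_{k}=\phi_{k-1}-\phi_k$.
\begin{itemize}
\item \emph{Main term.} Its contribution is bounded by $M_{HL}g\cdot \Lambda f$ (or $M_{HL}f\cdot M_{\mathrm{lac}} g$). Hölder's inequality then handles every point with $p_1,p_2>1$.
\item \emph{Terms $j\ge1$.} We bound $\sup_\ell$ by the $\ell^2$ sum over $\ell$ and aim for an estimate of the form
\[
\Big\|\Big(\sum_\ell|\mathfrak{S}_{2^\ell}(f,g*\psi_{\ell+j})|^2\Big)^{1/2}\Big\|_{L^p}\lesssim 2^{-\epsilon j}\|f\|_{p_1}\|g\|_{p_2}
\]
on a neighborhood of $(1/2,1/2)$. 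We obtain it from the $L^2$ curvature decay of the spherical measure on $\Ha$, using a $T^*T$ argument on the fixed-$z_1$ slices together with Hopf's maximal ergodic theorem for the flow in $z_1$.
\item \emph{Interpolation.} We interpolate this decay against a bound of polynomial growth $2^{Cj}$ valid near the corners $E,F$. That growth bound comes from the single-scale Theorem~\ref{single-scale-thm}, which holds up to $E$ and $F$, combined with a trivial $\ell^\infty\subset$ sum-over-$2^{Cj}$-scales bound.
\end{itemize}
Interpolation then sums the geometric series over $j$ on every open set strictly inside the pentagon $OAEFD$. Since the region is convex, multilinear interpolation between Steps 1 and 2 covers all of $\mathcal{R}$, including quasi-Banach $p<1$.

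The main obstacle is the $L^2$-based decay estimate for the $j$-pieces. The Heisenberg twist $\tfrac12\Im(z\cdot\bar w)$ prevents a direct Fourier factorization, so the $T^*T$ kernel has to be analyzed with the group Fourier transform (Laguerre functions). We would first try to reduce to the known decay $\|S_r\psi_k\|_{2\to2}\lesssim 2^{-(k)(2n-1)/2}$ for Nevo--Thangavelu means. It is also this step that fixes the endpoints $E,F$, at $(d-1)/d$.
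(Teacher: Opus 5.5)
There is a genuine gap in the interpolation bullet of Step~2, which is exactly the step meant to reach $E$ and $F$. Suppose you interpolate a bound $2^{-\epsilon j}$ at a point $P_0$ near $(\frac12,\frac12)$ against bounds $2^{Cj}$, $C>0$, at points near $E$. A convex combination putting weight $w$ on $P_0$ gets the constant $2^{j(C(1-w)-\epsilon w)}$, which is summable only if $w>C/(\epsilon+C)$. Since $P_0$ has coordinates near $\frac12$, such points have $1/p_2\le 1-c$ unless almost all the weight sits on growth-free points near $A$ (your Step~1 edge), and that forces $1/p_1$ to be small. So, whatever $\epsilon$ and $C$ are, points of $\mathcal{R}$ near $E$ and $F$ are never reached, and the final ``multilinear interpolation between Steps 1 and 2'' does not cover $\mathcal{R}$.

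Growth bounds of this kind carry no information about $(d-1)/d$ anyway. The spherical mean at radius $\le 2^\ell$ of $|g|*|\psi_{\ell+j}|$ (your $j$-th piece, at scale $2^{\ell-j}$) is at most a fixed power of $2^j$ times $M_{\mathrm{HL}}g$. Hence $\sup_\ell|\mathfrak{S}_{2^\ell}(f,g*\psi_{\ell+j})|\lesssim 2^{Cj}M_{\mathrm{HL}}g\,\Lambda(|f|)$ pointwise, which gives such a bound for all $p_1,p_2>1$. The specific growth bound you propose (single-scale theorem plus a ``sum over $2^{Cj}$ scales'') is also not available. The index $\ell$ ranges over all of $\Z$, and for $p<1$ the natural route $\|\sup_\ell|F_\ell|\|_p^p\le\sum_\ell\|F_\ell\|_p^p$ leads to $\|f\|_{p_1}^p\sum_\ell\|g*\psi_{\ell+j}\|_{p_2}^p$, which is not controlled by $\|g\|_{p_2}^p$. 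Near $E,F$ you need a \emph{growth-free} estimate, and nothing in your plan converts single-scale information into a maximal estimate there.

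The missing idea is the bootstrap argument of Christ used in the paper, which never needs a maximal estimate near $E,F$. First, one decomposes both $f$ and $g$ and interpolates the $L^2\times L^2\to L^1$ decay of the single-scale pieces $\mathfrak{S}^{j,k}$ against the bounds of Theorem~\ref{single-scale-thm}. Those bounds are uniform in $j,k$ on $\mathcal{D}$; this is where $E,F$ come from, via Proposition~\ref{single:improv}. The result is single-scale decay $2^{(j+k)\delta}$ throughout the interior of $\mathcal{D}$. Next, let $\Upsilon(L)$ be the best constant for $\sup_{|\ell|\le L}$. One interpolates the vector-valued $\ell^\infty$ estimate (constant $\Upsilon(L)$) with the $\ell^p$ estimate (constant $2^{(j+k)\delta}$), giving an $\ell^{2p}$ estimate with constant $2^{(j+k)\delta/2}\Upsilon(L)^{1/2}$. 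A double decomposition $f*\psi_{2^{j'+\ell}}*\psi_{2^{j+\ell}}$ (similarly for $g$), together with the Littlewood--Paley square function, then allows summation over $j,j',k,k'<0$ and yields $\Upsilon(L)\lesssim 1+\Upsilon(L)^{1/2}$.

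Your Step~1 via $\Lambda$ and the main-term bound $M_{\mathrm{HL}}g\,\Lambda f$ are correct and match the paper. The $L^2$ decay also needs no Laguerre analysis; the paper gets it from an iterated $T^*T$ argument and the curvature assumption. Two side claims are wrong, though dispensable. The horizontal ball averages in Step~1 involve all radii in $(0,2^\ell)$, so they are not dominated by the lacunary Nevo--Thangavelu operator. In the alternative $M_{\mathrm{HL}}f\cdot M_{\mathrm{lac}}g$, the maximal functions sit on the wrong functions, since only $g$ was smoothed.
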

Theorem~\ref{single-scale-thm}, slicing argument, and the Littlewood-Paley decomposition given in Proposition~\ref{LP} play quintessential role in the proof Theorem~\ref{Horizontal lac result}. The primary difference with the work \cite{BF24} lies on the fact that the Fourier transform on $\Ha$ becomes operator-valued and makes the use of the Fourier decay of the surface measure much more challenging. To circumvent this issue, we resort to the approach taken by \cite{HickmanJFA} and a suitable use of the $T^*T$ argument combined with the \textit{Curvature assumption} yields the desired $L^2(\Ha)\times L^2(\Ha)\to L^1(\Ha)$ with decay for the ``frequency-localised" pieces of $\mathfrak{S}.$ We believe one can also follow the approach by M\"uller--Seeger \cite{MS}  and use FIO based techniques, however, we think our approach maybe more suitable for extending this results to general homogeneous groups.

To have a proper historical perspective, we say a few words on the linear Nevo--Thangavelu spherical means. Let $\mu$ be the normalized surface measure on the horizontal sphere $S^{2n-1} \times \{0\} \subseteq \Ha$. For $r>0,$ the dilate of $\mu,$ $\mu_{r}$ is defined as $\langle f, \mu_{r}\rangle=\langle f\circ \delta_{r}, \mu\rangle,$ for Schwartz class functions. The linear Nevo--Thangavelu spherical means are the averages over horizontal spheres, defined as  
\begin{align}
A_{r} f(x):=f*\mu_{r}(x)=\int_{S^{2n-1}} { \textstyle f\big(z-r u, t- {\textstyle\frac{1}{2} } r  \Im(z \cdot \bar{u})\big) \,d \sigma_{2n-1}(u),\,x=(z, t)\in\mathbb{H}^n,    }
\end{align}
and for $r=1,$ we simply denote the averaging operator $A_1,$ by $A.$ Finally, the Nevo--Thangavelu spherical maximal function, for Schwartz class functions, is defined as 
\begin{align}
\label{NT}
M_{S}f(x)=\sup_{r>0}|A_{r}f(x)|,\quad x \in \mathbb{H}^n.    
\end{align}
 The authors in \cite{NeT} introduced the operator $M_{S}$ as an analogue of the Stein's spherical maximal (\cite{SteinSph}) function on $\Ha$ and proved the ergodicity of the spherical means $A_{r}$ by proving that $M_{S}$ is bounded on $L^p(\mathbb{H}^n)$ for $p > \frac{2n-1}{2n-2}$ with $n\geq 2$. Subsequently, it was independently proved by M\"uller--Seeger in \cite{MS} and by Narayanan--Thangavelu in \cite{NaT} that $M_{S}$ is bounded on $L^p(\mathbb{H}^n)$ if and only if $p > \frac{2n}{2n-1}$ and $n\geq 2$. On $\mathbb{H},$ we mention the important work \cite{BeltranRad} where the authors obtained Lebesgue space estimates of Nevo--Thangavelu maximal function on $\mathbb{H}$ restricted to the Heisenberg radial functions. In recent times also the operator $M_{S}$ has attracted a great deal of research, for instance, in \cite{BHRT} the authors studied the sparse operator bounds, weighted estimates for $M_{S},$ they also obtained the $L^p$ improving estimates for spherical means $A_{r}$ and thus built the analogue of the influential work of Lacey in \cite{Lacey} in the Heisenberg group. These were further improved and extended to the more general context of M\'etivier groups in \cite{Roos} using oscillatory integrals and Carleson-Sj\"olin estimates. In a recent breakthrough in \cite{SeegerAdv}, the authors extended the M\"uller--Seeger result on any two-step nilpotent Lie group. In view of the above, we believe our results are a timely addition to the rich literature in this direction. Lastly, we mention that there is another type of maximal operator on $\Ha$, known as the Kor\'anyi maximal operator studied in \cite{PT, Rajula} and bilinear analogue of them will be addressed in a future work. 

\medskip
\noindent\textit{Key ideas in proofs.} We close this section by key ideas in the proofs of our results. 
\begin{itemize}
\item
To start with, we obtain a description of the H\"older boundedness of single-scale operators for all indices $(\frac{1}{p_1}, \frac{1}{p_2})$ in the region $\mathcal{D}.$ Such an estimate will play a crucial role in the proof of the lacunary maximal function. This entails a lot new features not present in the Euclidean counterpart proved in \cite{IoseJGA}. For instance, the higher co-dimension of our operators and the non-abelian structure makes the use of slicing argument not directly applicable in our context. In Proposition~\ref{single:improv}, we compute the density of the pushforward measure using certain change of variables which are particularly tailored for this purpose. This is a major observations in this present work and is the content of our Section~\ref{sec:single}.
\item
Another key element that arises in the proof of Theorem~\ref{full} is the appearance of uniform spherical averages.
These are controlled by the ergodic maximal function associated to Hopf's theorem, unlike the Euclidean analogue, see Section~\ref{sec:full}.  Moreover, we consider Knapp-type examples to show that the Theorem~\ref{full} is sharp, and is available in Section~\ref{sec:sharp}.  
\item
After the single scale estimates in Section~\ref{sec:single} are available, we show that the ``dyadically localised" single-scale operators satisfy appropriate decay estimate. To avoid the use of the operator-valued group Fourier transform, we employ a $T^*T$ argument and this makes the proof of such an estimate much more challenging than the Euclidean counterpart. Finally, a delicate application of an argument by Christ combines ``dyadically-localised" pieces to obtain the final estimate for the lacunary maximal function. Here also our analysis involves a two-parameter family of Littlewood-Paley projections unlike the Euclidean case where one can easily construct Littlewood-Paley functions $\{\phi_{k}\}$ and $\{\phi'_{k}\}$ such that $\sum_{k}\phi_{k}\ast \phi'_{k}=\delta$ using the Fourier transform, a fact heavily used in such analysis, see \cite{BF24, HHY20}. 

Finally, we conclude by noting that the validity of the range of exponents for the boundedness of the lacunary maximal function, i.e., the region $\mathcal{R},$  is restricted by the
validity of the single-scale estimate Theorem~\ref{single-scale-thm}. The problem of obtaining the sharp range of exponents for single-scale operators will be handled in a future work.
\end{itemize}

\subsection*{Notation:} We use the notation $[A,B)\, \text{and}\, (A, B)$ to denote half-open (excluding $B$) and open line-segment joining the two points $A\, \text{and}\, B$ respectively. Also, the $L^{p}(\Ha)$ norm of a function $f$ will be denoted by $\|f\|_{L^p(\Ha)}$ or by $\|f\|_{p}.$ For $x\in \Ha$ and a function $g,$ the left translate is denoted by $\tau_{x}g(y):=g(x.y)$ for all $y\in \Ha.$ The notation $A\lesssim B$ represents that $A\leq  C B,$ for some implied constant $C$ which may depend on $n$, and $A\simeq B$  indicates $A\lesssim B$ and $B \lesssim A.$

\section{Preliminaries}
In this section, we collect some preliminaries which will be useful later. As we already defined, $\Ha:=\mathbb{C}^n\times\mathbb{R}$ is the $(2n+1)$-dimensional two-step nilpotent Lie group, equipped with group law
$$(z,t) \cdot (w,s) := \big(z + w, \, t + s + \textstyle\frac{1}{2}  \Im(  z \cdot \bar{w}) \big).$$

The Lebesgue measure $dzdt$ on $\mathbb{C}^n\times\mathbb{R}$ serves as the Haar measure on $\Ha$. As already mentioned, there is a family of parabolic dilations defined by $\delta_{r}(z, t):=(r z, r^2 t)$, and the Kor\'anyi norm is homogeneous of degree 1 with respect to this family of dilations, that is, $|\delta_{r}(z, t)|= r \,  |(z, t)|$. The convolution of $f$ with $g$ on $\Ha$ is defined by
\begin{equation*}
   f * g \, (x) = \int_{\Ha}  f(x y^{-1}) g(y) dy, \ \ \ x \in \Ha.
\end{equation*}
Also, for $r>0$ we denote \begin{equation*}
    f_r(x) := r^{-Q}f \circ \delta_{r^{-1}}(x);
\end{equation*}
where $Q=2n+2$ is the homogeneous dimension of $\Ha.$ For $f\in L^1_{\mathrm{loc}}(\Ha)$, the centered Hardy-Littlewood maximal function $M_{\text{HL}}f$ is defined by 
\begin{align}
M_{\text{HL}}f(x):=\sup_{r>0}\frac{1}{|B(x, r)|}\int_{B(x, r)} |f(y)|\, dy, \quad x\in \Ha, 
\end{align}
where the supremum is over all balls $B(x, r),$ where $B(x, r)$ denotes the ball of radius $r$ centered at $x$ under the left invariant metric $d_{L}(x, y)= |x^{-1}\cdot y|.$ The standard $L^p$ boundedness for $1<p\leq \infty$ and the weak $(1,1)$ boundedness of $M_{\text{HL}}$ follow as $\Ha$ is a space of homogeneous type. 
\medskip

\noindent{\textit{Littlewood-Paley theory.}} Littlewood-Plaey decomposition are well-known for homogeneous groups, we recall it here specialized to the Heisenberg group $\Ha.$ Following \cite{HickmanJFA}, consider a function $\phi\in C_{c}^{\infty}(\Ha)$ with mean $1,$ that is, $\int \phi=1.$ Moreover, for $k\in \Z,$ the functions $\psi_{2^k}$ are defined as
$$\psi_{2^{k}}(x):=-\int_{2^{k}}^{2^{k+1}}\frac{\partial \phi_{r}(x)}{\partial r}\, dr.$$
Then the family $\{\psi_{2^k}\}_{k\in \Z}$ forms
a Littlewood-Paley decomposition in the sense:
\begin{prop}[\cite{HickmanJFA}]
\label{LP}
Let $1<p<\infty$ and $f\in C_{c}^{\infty}(\Ha).$ Then we have  $$f=\sum_{k}f\ast \psi_{2^k},$$
where the convergence is in $L^p(\Ha)$ norm. 
\end{prop}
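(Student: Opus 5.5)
The plan is to observe that the sum telescopes, which reduces the proposition to two elementary limits: an approximate-identity statement as $r\to 0$ and a decay statement as $r\to\infty$.

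\emph{Telescoping.} By the fundamental theorem of calculus in $r$ (legitimate since $\phi\in C_c^\infty(\Ha)$ and $r\mapsto \phi_r(x)$ is smooth on $(0,\infty)$), we have $\psi_{2^k}=\phi_{2^k}-\phi_{2^{k+1}}$ pointwise. Hence, for integers $N,M\geq 0$,
\begin{equation*}
\sum_{k=-N}^{M} f\ast\psi_{2^k} \;=\; f\ast\phi_{2^{-N}} \;-\; f\ast \phi_{2^{M+1}}.
\end{equation*}
It therefore suffices to show that $f\ast\phi_{2^{-N}}\to f$ and $f\ast\phi_{2^{M+1}}\to 0$ in $L^p(\Ha)$ as $N,M\to\infty$.

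\emph{Small scales.} Since $\int\phi_r=\int\phi=1$ (the dilation $\delta_r$ has Jacobian $r^{Q}$), we can write
\begin{equation*}
f\ast\phi_r(x)-f(x)=\int_{\Ha}\big(f(x\cdot \delta_r(y)^{-1})-f(x)\big)\phi(y)\,dy .
\end{equation*}
Minkowski's integral inequality then gives
\begin{equation*}
\|f\ast\phi_r-f\|_p\le \int_{\Ha} \|f(\cdot\, \delta_r(y)^{-1})-f\|_p\,|\phi(y)|\,dy .
\end{equation*}
For fixed $y$ in the compact set $\mathrm{supp}\,\phi$, the point $\delta_r(y)^{-1}$ tends to the identity as $r\to0$. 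Right translation is continuous in $L^p(\Ha)$ for $p<\infty$; this is immediate here because $f\in C_c^\infty$ is uniformly continuous with supports staying in a fixed compact set. The integrand is bounded by $2\|f\|_p|\phi(y)|$, so dominated convergence gives $f\ast\phi_{2^{-N}}\to f$ in $L^p$.

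\emph{Large scales.} Here we use $p>1$. Young's inequality on the unimodular group $\Ha$ gives $\|f\ast\phi_R\|_p\le \|f\|_1\|\phi_R\|_p$. By homogeneity, $\|\phi_R\|_p=R^{-Q}\,\|\phi\circ\delta_{R^{-1}}\|_p=R^{-Q(1-1/p)}\|\phi\|_p$. This tends to $0$ as $R=2^{M+1}\to\infty$, precisely because $p>1$, and $\|f\|_1<\infty$ since $f\in C_c^\infty$. This proves the proposition.

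The only mildly delicate point, and the step I would check most carefully, is this large-scale term. It is where $p>1$ is genuinely needed: for $p=1$ the mass $\int f\ast\phi_R=\int f$ is conserved and need not vanish. It is also where $f\in C_c^\infty\subset L^1\cap L^p$ is used, rather than a general $f\in L^p$; for general $f$ one would instead argue by density together with a uniform bound on the partial-sum operators.
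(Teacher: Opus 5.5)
Your proof is correct and complete. The telescoping identity $\sum_{k=-N}^{M} f\ast\psi_{2^k}=f\ast\phi_{2^{-N}}-f\ast\phi_{2^{M+1}}$ holds. The approximate-identity limit at small scales holds as stated. At large scales, the bound $\|f\ast\phi_R\|_p\le \|f\|_1 R^{-Q(1-1/p)}\|\phi\|_p$ (the only place $p>1$ is needed) also holds. The paper gives no proof of this proposition and simply quotes it from Hickman's work, so there is no in-paper argument to compare against; your telescoping argument is the standard proof of this fact.
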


\noindent{\textit{Birkhoff averages.}} As mentioned earlier, an important operator that arises very naturally are the following uniform averages of linear Nevo--Thangavelu means. More precisely, let $\mu=\sigma_{2n-1}\ast \delta_{0}$ denotes the measure on the horizontal sphere $S^{2n-1}\times \{0\}$, then 
$$A_{s}f(x):=f\ast \mu_{s}(x)=\int_{S^{2n-1}} f(x. \delta_{s}(z, 0)^{-1})\, d\sigma_{2n-1}(z).$$
Consider the maximal operator 
\begin{equation}
\label{ergodic-max}
 \Lambda(g)(x):= \sup_{r>0}|\Lambda_{r}g(x)|:= \sup_{r>0} \big|g \ast {\Big( \frac{1}{r} \int_{0}^{r} \mu_{s} \, ds  \Big)} \big|.
\end{equation}
Realizing the uniform averages $\Lambda_{r}$ as Birkhoff averages over the group of reals, the following result follows from the Hopf's abstract maximal ergodic theorem.
\begin{prop}[\cite{SS} or Proposition 3.6.1, \cite{ThangBook}] \label{Birkhoff avg} Let $1<q< \infty$. Then $\Lambda$ defines a bounded operator on $L^q(\Ha)$ to itself, that is,
\begin{equation*}
  \|\Lambda g\|_{L^q(\Ha)} \leq C \| g \|_{L^q(\Ha)}.
\end{equation*} 
\end{prop}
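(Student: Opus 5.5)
The plan is to write the horizontal sphere average as a superposition of one-parameter measure-preserving flows, and then apply the one-dimensional maximal ergodic theorem to each flow with a constant that does not depend on the flow.

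\textbf{Step 1: the flows $T^u$.} Fix a unit vector $u\in S^{2n-1}\subset\C^n$ and consider the curve $s\mapsto \gamma_u(s):=(su,0)\in\Ha$, $s\in\mathbb R$. Since $\Im(su\cdot\overline{s'u})=ss'\,\Im\|u\|^2=0$, the group law gives $\gamma_u(s)\cdot\gamma_u(s')=\gamma_u(s+s')$. So $\gamma_u$ is a one-parameter subgroup of $\Ha$, with $\delta_s(u,0)^{-1}=\gamma_u(-s)$. Define
\[
T^u_s f(x):=f(x\cdot\gamma_u(-s)), \qquad s\in\mathbb R .
\]
Right translation preserves the Haar measure $dz\,dt$ (the group is unimodular; concretely, $(z,t)\mapsto(z-su,\,t-\tfrac12 s\Im(z\cdot\bar u))$ has Jacobian $1$). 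Hence $\{T^u_s\}_{s\in\mathbb R}$ is a measurable measure-preserving flow on $L^q(\Ha)$, and in particular a group of positive isometries.

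\textbf{Step 2: rewriting $\Lambda_r$.} By definition and Fubini,
\[
\Lambda_r g(x)=\int_{S^{2n-1}}\Big(\frac1r\int_0^r T^u_s g(x)\,ds\Big)\,d\sigma(u).
\]
This yields the pointwise bound
\[
\Lambda g(x)\le \int_{S^{2n-1}} M^u|g|(x)\,d\sigma(u),\qquad M^u h(x):=\sup_{r>0}\frac1r\int_0^r T^u_s h(x)\,ds .
\]
Each $M^u$ is the ergodic maximal function of a measure-preserving $\mathbb R$-action.

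\textbf{Step 3: uniform bounds for $M^u$.} By Hopf's (or Wiener's) maximal ergodic theorem, $M^u$ is weak type $(1,1)$ and bounded on $L^\infty$, with constants depending only on the abstract structure and not on $u$. Marcinkiewicz interpolation then gives $\|M^u h\|_q\le C_q\|h\|_q$ for $1<q<\infty$, uniformly in $u$.

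One can also see this concretely. Foliate $\Ha$ into the orbits $x\cdot\gamma_u(\mathbb R)$; these are the left cosets of the subgroup $\gamma_u(\mathbb R)$. Haar measure disintegrates as Lebesgue measure on each orbit times a quotient measure. On each orbit, $M^u$ is the one-sided one-dimensional Hardy--Littlewood maximal function. Applying the $L^q(\mathbb R)$ bound for that operator on each orbit and then Fubini gives the same uniform estimate.

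\textbf{Step 4: conclusion.} Minkowski's integral inequality gives
\[
\|\Lambda g\|_q\le\int_{S^{2n-1}}\|M^u|g|\|_q\,d\sigma(u)\le C_q\|g\|_q .
\]

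The only point needing care is the uniformity in $u$ in Step 3, together with the measurability of $(u,x,s)\mapsto T^u_s g(x)$. For $g\in C_c^\infty$ measurability is clear, and the supremum over $r$ may be taken over rational $r$ by continuity. This justifies Fubini, and the bound then extends to all of $L^q$ by density and monotone convergence. I expect no serious obstacle beyond verifying the one-parameter subgroup property in Step 1, which is exactly why the horizontal (rather than general) sphere makes this argument work.
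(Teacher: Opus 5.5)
Your proof is correct and follows essentially the same route as the paper. The paper gives no proof beyond its two citations and a remark that $\Lambda_r$ is realized as Birkhoff averages over $\mathbb{R}$ to which Hopf's maximal ergodic theorem applies; that is your argument: split $\Lambda_r$ over $u\in S^{2n-1}$ into ergodic averages along the one-parameter subgroups $s\mapsto(su,0)$, which are subgroups because $u\cdot\bar u=\|u\|^2$ is real, then apply the maximal ergodic theorem for measure-preserving flows uniformly in $u$ and Minkowski's inequality.
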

This operator played a crucial role in the works \cite{NeT, NaT}. As the reader will observe in the sequel, $\Lambda$ also plays a pervasive role in many of our estimates.

\section{Single scale estimates}
\label{sec:single}
This section is dedicated in proving the boundedness of single-scale operators $\mathfrak{S}_{r},$ Theorem~\ref{single-scale-thm}. We begin with the following proposition. Besides being a primary ingredient in the proof of boundedness of the lacunary maximal function, the following result is of independent interest and involves delicate adaptations pertaining to the Heisenberg group. For brevity, we denote $\mf{S}_{1}$ as $\mf{S}.$
\begin{prop}
\label{single:improv}
Let $\frac{d}{d-1} < p_2<\infty.$ Then the operator $\mathfrak{S}$ is bounded from $L^1(\Ha)\times L^{p_2}(\Ha)\rightarrow L^{1}(\Ha).$ Similarly, for $\frac{d}{d-1} <  p_1<\infty,$ $\mathfrak{S}$ is bounded from $L^{p_1}(\Ha) \times L^1(\Ha)\rightarrow L^{1}(\Ha).$
\end{prop}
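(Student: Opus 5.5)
The idea is to integrate out the variable $x$ so that the bilinear estimate reduces to an $L^{p_2}\to L^\infty$ bound for a single linear averaging operator. That bound in turn reduces to an $L^{q}$ bound, $q=p_2'$, for the density of an explicit pushforward measure on $\Ha$. By symmetry of $\sigma_{4n-1}$ under $(z_1,z_2)\mapsto(z_2,z_1)$, it suffices to treat $L^1\times L^{p_2}\to L^1$.

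\emph{Step 1 (reduction).} We may assume $f,g\ge 0$. By Fubini and the left-invariance of Haar measure, substituting $y=x\cdot(z_1,0)^{-1}$ gives
\begin{equation*}
\|\mathfrak S(f,g)\|_{L^1(\Ha)}=\int_{\Ha} f(y)\, Tg(y)\,dy,\qquad Tg(y):=\int_{S^{4n-1}} g\big(y\cdot(z_1,0)\cdot(z_2,0)^{-1}\big)\,d\sigma(z_1,z_2).
\end{equation*}
From the group law,
\begin{equation*}
(z_1,0)\cdot(-z_2,0)=\big(z_1-z_2,\,-\tfrac12\Im(z_1\cdot\bar z_2)\big)=:\Phi(z_1,z_2).
\end{equation*}
Let $\nu=\Phi_*\sigma$ on $\C^n\times\R\cong\R^{d}$. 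Then $Tg(y)=\int g(y\cdot w)\,d\nu(w)$. If $\nu$ has a density $h\in L^{q}(\Ha)$ with $q=p_2'$, Hölder's inequality gives $\|Tg\|_\infty\le \|h\|_{q}\|g\|_{p_2}$, and hence $\|\mathfrak S(f,g)\|_1\le\|h\|_q\|f\|_1\|g\|_{p_2}$. Since $p_2>\frac{d}{d-1}$ is equivalent to $q<d$, everything reduces to showing that $h\in L^q$ for all $1\le q<d$.

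\emph{Step 2 (computing the density).} Change variables by $u=z_1-z_2$, $v=z_1+z_2$. This is a rotation composed with a dilation by $\sqrt2$, so $\sigma_{4n-1}$ becomes the normalized measure on the sphere $\{|u|^2+|v|^2=2\}$. A short computation gives $\Im(z_1\cdot\bar z_2)=\frac12\Im(u\cdot\bar v)$, so
\begin{equation*}
\Phi=\big(u,\,-\tfrac14\Im(u\cdot\bar v)\big).
\end{equation*}
Next apply the slicing formula \eqref{slicing-formula} (in $\R^{2n}\times\R^{2n}$, suitably rescaled). Fixing $u$ in the ball of radius $\sqrt2$, the variable $v$ ranges over a sphere of radius $\rho=\sqrt{2-|u|^2}$ in $\R^{2n}$, with weight $\simeq\rho^{2n-2}$. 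The map $v\mapsto-\frac14\Im(u\cdot\bar v)$ is a linear functional on $\R^{2n}$ of norm $\frac14|u|$. The pushforward of the normalized measure on $S^{2n-1}$ under a unit linear functional has density $c(1-s^2)_+^{(2n-3)/2}$ on $[-1,1]$, which is bounded because $n\ge2$. Rescaling, we expect
\begin{equation*}
h(u,t)\lesssim \rho^{2n-2}\,\frac{1}{|u|\rho}\,\mathbf 1_{\{|t|\le |u|\rho/4\}}.
\end{equation*}

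\emph{Step 3 ($L^q$ integrability), the main obstacle.} Near $|u|=\sqrt2$ the bound is $\lesssim\rho^{2n-3}$, which is harmless. Near $u=0$ we have $h\lesssim |u|^{-1}$ on a $t$-interval of length $\simeq|u|$. Therefore
\begin{equation*}
\int h^q\lesssim\int_{|u|\le1}|u|^{1-q}\,du<\infty \iff q-1<2n \iff q<d.
\end{equation*}
This gives exactly the threshold $p_2>\frac d{d-1}$.

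The delicate point is making Step 2 rigorous. One has to justify the disintegration, namely that $\nu$ is absolutely continuous with the stated density. I would do this by testing $\nu$ against a continuous $G(u,t)$, applying \eqref{slicing-formula} in $(u,v)$, and then using the one-dimensional projection formula for the inner sphere integral. I would also track the constants when $|u|\rho$ degenerates, i.e. as $u\to0$ and as $|u|\to\sqrt2$.
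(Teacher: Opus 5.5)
Your proposal is correct and is essentially the paper's argument. Both proofs use Fubini to reduce to the linear operator $\mathcal{I}g(x)=\int_{S^{4n-1}} g(x\cdot(z_1,0)\cdot(z_2,0)^{-1})\,d\sigma$, the change of variables $z_1=\frac{\mathfrak a+\mathfrak b}{2}$, $z_2=\frac{\mathfrak a-\mathfrak b}{2}$, the slicing formula \eqref{slicing-formula}, and a rotation onto the first coordinate with projection density $(1-s^2)^{(2n-3)/2}$. Both then conclude with H\"older against the bound $|\mathfrak b|^{-1}$ on a $t$-interval of length $\simeq|\mathfrak b|$, which is $L^{p_2'}$-integrable exactly when $p_2'<2n+1=d$.

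Two small points of bookkeeping. You correctly keep the radius $\sqrt{2}$ of the rotated sphere, which the paper suppresses. Also, the substitution $y=x\cdot(z_1,0)^{-1}$ is a right translation, so it relies on right-invariance of Haar measure rather than left-invariance; this is harmless since $\Ha$ is unimodular.
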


\begin{proof}
We shall only prove the first estimate as the second one follows by similar arguments. As $\mathfrak{S}$ is a positive operator, we can take $f, g$ to be non-negative functions. Performing a change of variables, we can write the $L^1$ norm of $\mathfrak{S}(f,g)$ as
\begin{align}
\nonumber&\int_{\Ha}  \mathfrak{S}(f,g)(x)\, dx\\
\nonumber& = \int_{\Ha}\int_{S^{4n-1}}f(x. (z_1, 0)^{-1}) g(x. (z_2, 0)^{-1})\, d\sigma(z_1, z_2)\,dx\\
\nonumber&=\int_{\Ha} f(x) \int_{S^{4n-1}} g(x. (z_1, 0).(z_2, 0)^{-1})\, d\sigma(z_1, z_2)\, dx=:\int_{\Ha} f(x)\, \mathcal{I}(g)(x)\, dx,
\end{align}
where  
$$\mathcal{I}g(x)=\int_{S^{4n-1}} g(x. (z_1, 0).(z_2, 0)^{-1})\, d\sigma(z_1, z_2).$$
We claim that $\mathcal{I}(g)(x) \leq C \|g\|_{L^{p_2}(\Ha)}.$ This in turn will conclude that $\mathfrak{S}: L^1(\Ha)\times L^{p_2}(\Ha)\to L^1(\Ha).$ We proceed to prove the claim. A further change of variables $z_{1}=\frac{\mathfrak{a}+\mathfrak{b}}{2}, z_{2}=\frac{\mathfrak{a}-\mathfrak{b}}{2}$ reduces the above to
\begin{equation*}
 \int_{S^{4n-1}} g(x. (\mf{b}, \frac{1}{4}\Im( \mf{b}. \mf{\bar{a}}))\, d\sigma(\mf{a}, \mf{b})  
\end{equation*}
which is same as $$\int_{\mathbb{R}^{2n}\times \mathbb{R}^{2n}} (\tau_{x}g)(\mf{b}, \frac{1}{4}\Im(\mf{b}. \mf{\bar{a}}))\, \delta(|\mf{a}|^2+|\mf{b}|^2-1) d\mf{a}\, d\mf{b}.$$  
Utilizing the the slicing argument \eqref{slicing-formula} we break the integral as
\begin{equation}
\label{endpt: slice}
 \int_{\mf{b}\in B^{2n}(0, 1)} (1-|\mf{b}|^2)^{n-1} \int_{S^{2n-1}} \tau_{x}g \bigg(\mf{b}, \frac{\sqrt{1-|\mf{b}|^2}}{4} \Im(\mf{b} . \mf{\bar{a}})\bigg) d\sigma_{2n-1}(\mf{a})\, d\mf{b}.    
\end{equation}
For a fixed $\mf{b}\in B^{2n}(0, 1),$ we call the inner integral $$\Upsilon(b):=\int_{S^{2n-1}} (\tau_{x}g)\bigg(\mf{b}, \frac{\sqrt{1-|\mf{b}|^2}}{4} \Im(\mf{b} . \mf{\bar{a}})\bigg)\, d\sigma_{2n-1}(\mf{a}).$$
For a non-zero $\mf{b},$ we write $\mf{b}=|\mf{b}| A\mf{e}_{1}$ for some orthogonal matrix $A\in O(2n)$ and $\mf{e}_{1}$ is the standard basis vector $(1, 0, \cdots, 0)\in \C^n.$ Therefore, using the rotational invariance of the surface measure, we obtain
\begin{align}
\nonumber\Upsilon(b)&:=\int_{S^{2n-1}} (\tau_{x}g)\bigg(\mf{b}, \frac{\sqrt{1-|\mf{b}|^2}}{4} |\mf{b}| (A\mf{e}_{1} . J\mf{\bar{a}})\bigg)\, d\sigma_{2n-1}(\mf{a})\\
\nonumber&\overset{\mf{a}\to J^{-1}\mf{a}}{=}\int_{S^{2n-1}} (\tau_{x}g)\bigg(\mf{b}, \frac{\sqrt{1-|\mf{b}|^2}}{4} |\mf{b}| (A\mf{e}_{1} . \mf{\bar{a}})\bigg)\, d\sigma_{2n-1}(\mf{a})\\
\nonumber&=\int_{S^{2n-1}} (\tau_{x}g)\bigg(\mf{b}, \frac{\sqrt{1-|\mf{b}|^2}}{4} |\mf{b}| (\mf{e}_{1} . A^{-1}\mf{\bar{a}})\bigg)\, d\sigma_{2n-1}(\mf{a})\\
&\overset{\mf{a}\to A\mf{a}}{=}\int_{S^{2n-1}} (\tau_{x}g)\bigg(\mf{b}, \frac{\sqrt{1-|\mf{b}|^2}}{4} |\mf{b}| {a}_{1}\bigg)\, d\sigma_{2n-1}(\mf{a}),
\label{enpt:slice:1}
\end{align}
where $ J :={ \scriptsize \begin{pmatrix}
0 & I_{n} \\
-I_{n} & 0 
\end{pmatrix} }$ is the matrix associated to the standard symplectic form on $\mathbb{R}^{2n}$ given by $\mf{b}^{T} J \mf{a}:= -\Im(\mf{b}. \mf{\bar{a}})$, $\mf{a}, \mf{b} \in \mathbb{C}^{n} \equiv \mathbb{R}^{2n}$.
Using spherical coordinates, we further decompose \eqref{enpt:slice:1} as
\begin{align}
\nonumber\Upsilon(b)=\nonumber&\int_{-1}^{1}\int_{S^{2n-2}} (\tau_{x}g)\bigg(\mf{b}, \frac{\sqrt{1-|\mf{b}|^2}}{4} |\mf{b}| s\bigg)\, (\sqrt{1-s^2})^{2n-3} d\theta\, ds\\
& = c_{n} \int_{-1}^{1}(\tau_{x}g)\bigg(\mf{b}, \frac{\sqrt{1-|\mf{b}|^2}}{4} |\mf{b}| s\bigg)\, (\sqrt{1-s^2})^{2n-3}\, ds.\label{enpt:slice:2}
\end{align}
Combining \eqref{endpt: slice} and \eqref{enpt:slice:2} we derive that
\begin{align*}
 \mathcal{I}g(x)=c_{n} \int_{\mf{b}\in B^{2n}(0, 1)}(1-|\mf{b}|^2)^{(n-1)} \int_{-1}^{1}(\tau_{x}g)\bigg(\mf{b}, {\textstyle \frac{\sqrt{1-|\mf{b}|^2}}{4} |\mf{b}| } s\bigg)\, (\sqrt{1-s^2})^{2n-3}\, ds \, d \mf{b}.
\end{align*}
Again, performing the change of variable $s\to \frac{4s}{|\mf{b}| \sqrt{1-|\mf{b}|^2} }$ the above expression  becomes a constant times
\begin{align}
 \int_{  \mf{b}\in B^{2n}(0, 1) } {  (  1-|\mf{b}|^2)^{(n-1)}  } \int_{|s|< \frac{1}{4}|\mf{b}|\sqrt{1-|\mf{b}|^2 }}(\tau_{x}g)(\mf{b}, s) {\big( { \textstyle 1-\frac{16s^2}{|\mf{b}|^2(1-|\mf{b}|^2)}   } \big) }^{\frac{(2n-3)}{2}}  { \textstyle   \frac{ds\, d\mf{b}}{|\mf{b}|\sqrt{1-|\mf{b}|^2}}  }.
\end{align}
Using H\"older's inequality this is dominated by
\begin{align*}
\|\tau_{x}g\|_{L^{p_2}(\Ha)} \left(\int_{|\mf{b}|<1} \int_{|s|< \frac{1}{4} |\mf{b}|\sqrt{1-|\mf{b}|^2 }} \frac{1}{|\mf{b}|^{p_{2}'}} d\mf{b}\, ds \right)^{1/p'_2}\leq C \|\tau_{x}g\|_{L^{p_2}(\Ha)}=C\|g\|_{L^{p_2}(\Ha)},  
\end{align*}
provided $-p_{2}'+2n+1> 0\iff p_{2}'< 2n+1$ which is equivalent to $p_{2} > \frac{2n+1}{2n}=\frac{d}{d-1}.$ This proves the claim, and thus, in turn completes the proof.  
\end{proof}

Our next result, for $p_{2}>\frac{d}{d-1},$ lifts the boundedness from $$\mathfrak{S}: L^{1}(\Ha)\times L^{p_2}(\Ha)\to L^{1}(\Ha)$$ to the H\"older bound
$$\mathfrak{S}: L^{1}(\Ha)\times L^{p_2}(\Ha)\to L^{s}(\Ha)$$ for all $s\in [\frac{p_2}{p_2+1}, 1].$ The proof essentially relies on the local nature of $\mathfrak{S}.$  The idea behind the proof goes back to work of Kenig--Stein \cite{KS}, see also \cite{IoseJGA}. This result will play an important role in propagating our $L^2(\Ha)\times L^2(\Ha)\to L^{1}(\Ha)$ decay estimate Proposition~\ref{lac piece 22 to 1} to the region $\mathcal{R}.$

\begin{lemma}
\label{KS}
Let $n\geq 2$. Then for each $\frac{d}{d-1} < p_{2} < \infty$  the operator $\mathfrak{S}$ maps $L^{1}(\Ha)\times L^{p_{2}}(\Ha)$ to $L^{s}(\Ha)$ boundedly, for all $s\in [\frac{p_2}{p_2+1}, 1].$ Similarly, for each $\frac{d}{d-1} < p_{1} < \infty$ the operator $\mathfrak{S}$ maps $L^{p_{1}}(\Ha)\times L^{1}(\Ha) $ to $L^{s}(\Ha)$ boundedly, for all $s\in [\frac{p_1}{p_1+1}, 1].$
\end{lemma}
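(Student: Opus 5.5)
We follow the Kenig--Stein localization idea. The endpoint $s=1$ is exactly Proposition~\ref{single:improv}, so it suffices to handle the other endpoint $s=\frac{p_2}{p_2+1}$. Since Hölder's inequality on a fixed compact set interpolates between the two, the real content is the estimate with $\frac1s=1+\frac1{p_2}$, together with a localized form of the $L^1$ bound. We only treat the first statement; the second is symmetric, with $\mathcal I$ built from the first variable.

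\emph{Step 1 (localization).} The support of $\mathfrak S=\mathfrak S_1$ is compact. If $(z_1,z_2)\in S^{4n-1}$, then $x\cdot(z_i,0)^{-1}$ lies within Korányi distance $1$ of $x$. Tile $\Ha$ by the left translates $Q_j=x_j\cdot Q_0$ of a fixed fundamental domain $Q_0$ of a lattice subgroup (for instance the integer Heisenberg group). Let $\tilde Q_j$ be the enlargement of $Q_j$ by a fixed distance $2$. Then the enlargements have bounded overlap and all have the same measure $|\tilde Q_0|$, by left invariance of Haar measure. For $x\in Q_j$ we have
\[
\mathfrak S(f,g)(x)=\mathfrak S(f\chi_{\tilde Q_j},g\chi_{\tilde Q_j})(x).
\]

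\emph{Step 2 (local estimate).} Apply Hölder on $Q_j$ with exponent $1/s\ge1$ and then Proposition~\ref{single:improv}. This gives
\[
\|\mathfrak S(f,g)\|_{L^s(Q_j)}\le |Q_0|^{\frac1s-1}\|\mathfrak S(f\chi_{\tilde Q_j},g\chi_{\tilde Q_j})\|_{L^1}\lesssim \|f\|_{L^1(\tilde Q_j)}\|g\|_{L^{p_2}(\tilde Q_j)},
\]
uniformly in $j$, since the constant depends only on $|Q_0|$. This works for every $s\le 1$.

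\emph{Step 3 (summation).} Since $s\le1$,
\[
\|\mathfrak S(f,g)\|_s^s=\sum_j\|\mathfrak S(f,g)\|_{L^s(Q_j)}^s\lesssim\sum_j \|f\|_{L^1(\tilde Q_j)}^s\|g\|_{L^{p_2}(\tilde Q_j)}^s.
\]
Apply the discrete Hölder inequality with exponents $1/s$ and $p_2/s$. These are conjugate exactly when $s+\frac{s}{p_2}=1$, that is, $s=\frac{p_2}{p_2+1}$. This bounds the sum by
\[
\Big(\sum_j\|f\|_{L^1(\tilde Q_j)}\Big)^s\Big(\sum_j\|g\|^{p_2}_{L^{p_2}(\tilde Q_j)}\Big)^{s/p_2}\lesssim \|f\|_1^s\|g\|_{p_2}^s,
\]
using bounded overlap. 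For $s\in(\frac{p_2}{p_2+1},1)$ the conjugate exponents $1/s$ and $p_2/s$ would need $s+\frac s{p_2}=1$, which fails. Instead, use the nesting $\ell^{a}\subset\ell^{b}$ for $a\le b$. Take $\alpha=1/s$ and $\beta=(1-s)^{-1}\ge p_2/s$, which are conjugate. Then $\sum_j a_j^s b_j^s\le \|(a_j)\|_{\ell^1}^s\|(b_j^s)\|_{\ell^{\beta}}$, and
\[
\|(b_j^s)\|_{\ell^\beta}=\|(b_j)\|_{\ell^{s\beta}}^s\le\|(b_j)\|_{\ell^{p_2}}^s,
\]
since $s\beta\ge p_2$. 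So all $s$ in the range are handled directly.

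\emph{Main obstacle.} The only delicate point is setting up the tiling on the non-abelian group: we need left translates, a uniform support radius with respect to the left-invariant Korányi metric, and bounded overlap of the enlargements. All of these follow from left invariance of the metric and Haar measure. The rest is routine, and the upper endpoint $s=1$ is Proposition~\ref{single:improv} itself.
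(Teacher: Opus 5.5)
Your proof is correct and is essentially the paper's Kenig--Stein localization argument: you lift $L^s$ to $L^1$ on cells of bounded measure via H\"older, apply Proposition~\ref{single:improv} to the localized pieces, and sum using discrete H\"older and bounded overlap. The differences are only cosmetic. You partition the output space and restrict $f,g$ to the enlarged cell, where the paper decomposes the inputs using its properties (P1)--(P2). You also treat every $s\in[\frac{p_2}{p_2+1},1]$ directly by $\ell^p$ nesting, where the paper proves the endpoint $s_0=\frac{p_2}{p_2+1}$ and interpolates it with $s=1$.
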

\begin{proof} Fix $p_{2}$ such that $\frac{d}{d-1} < p_{2} < \infty$. Pick $\varepsilon >0$ so that $ \textstyle \frac{1}{p_{2}} = \frac{1}{d/(d-1) + \varepsilon} $. Set $ \textstyle \frac{1}{s_{0}} := 1 + \frac{1}{d/(d-1) + \varepsilon} $. Then from Proposition \ref{single:improv}, the case $s=1$, and the continuity of exponents in the complex interpolation it thus suffices to show
 $$\mathfrak{S}: L^{1}(\Ha)\times L^{p_{2}}(\Ha)\to L^{s_0}(\Ha).$$ For $a \in \Z^{2n+1}$, let $Q_{a} = a \cdot Q_{0}$, where $Q_{0}= [ 0,1)^{2n+1}$. The proof relies on two localization principles as follows:
\begin{itemize}
    \item[(P1)] There exists a dimensional constant $\kappa_{1}\geq 1$ such that $$\mathfrak{S}(f \chi_{Q_{a}} , g\chi_{Q_{b}})(x)=0\,\text{whenever}\, |b^{-1}a|\geq \kappa_1.$$
    \item[(P2)] There exists a dimensional constant $\kappa_{2}\geq 1$ such that $\supp(\mathfrak{S}(f, g))\subset (\supp(f)\cup \supp(g))\cdot B(0, \kappa_{2}).$
\end{itemize}
The properties $(P1), \text{and} (P2)$ are easy to verify and we leave the details. As $0<s_{0}< 1,$ $\|\mathfrak{S}(f, g)\|_{L^{s_0}(\Ha)}^{s_0}$ is dominated by
\begin{align}
\label{P1}
\int_{\Ha} \sum_{a \in \Z^{2n+1}}\sum_{b\in \Z^{2n+1}: |b|\leq \kappa_{1}}|\mathfrak{S}(f \chi_{Q_{a}}, g\chi_{Q_{a\cdot b}})(x)|^{s_0}\, dx.
\end{align}
By $(P2),$ for fixed $a, b\in \Z^{2n+1,}$ the support of $\mathfrak{S}(f \chi_{Q_{a}}, g\chi_{Q_{a\cdot b}})$ is contained in a finite measure set whose measure depends only on the dimension. Therefore, by H\"older's inequality, we lift the $L^{s_0}$ norm to $L^1$ and thus \eqref{P1} is further dominated by
\begin{align}
\nonumber \sum_{b\in \Z^{2n+1}: |b|\leq \kappa_{1}}\sum_{a \in \Z^{2n+1}}\left(\int_{\Ha} |\mathfrak{S}(f \chi_{Q_{a}}, g\chi_{Q_{a \cdot b}})(x)|\, dx\right)^{s_0}.  
\end{align}
At this point, we invoke $\mathfrak{S}: L^{1}(\Ha)\times L^{\frac{d}{d-1} + \varepsilon}(\Ha)\to L^{1}(\Ha)$ boundedness from Proposition~\ref{single:improv} to control the above by
\begin{align}
\nonumber \sum_{b\in \Z^{2n+1}: |b|\leq k_{1}}\sum_{a \in \Z^{2n+1}}\|f \chi_{Q_{a}}\|_{L^{1}(\Ha)}^{s_0} \|g\chi_{Q_{a\cdot b}})\|_{L^{\frac{d}{d-1} + \varepsilon}(\Ha)}^{s_0}.  
\end{align}
A further application of the H\"older's inequality and the bounded overlap of the family of cells $\{Q_{a}\}_{a\in \Z^{2n+1}},$ conclude that the above is bounded by $\|f\|_{L^{1}(\Ha)}^{s_0} \|g\|_{L^{\frac{d}{d-1} + \varepsilon}(\Ha)}^{s_0},$ and this completes the proof.
\end{proof}

\begin{proof}[Proof of Theorem \ref{single-scale-thm}]
Considering the scaling condition 
\begin{equation*}
    \mathfrak{S}_{r}(f,g)= \mathfrak{S}_{1} (f \circ \delta_{r}, g \circ \delta_{r}) \circ \delta_{1/r},
\end{equation*}
we will prove our results for $\mathfrak{S}$ only. For $1\leq p_1, p_2, p\leq \infty,$ that is for Banach range of exponents, we use  Minkowski's inequality to obtain $$\|\mathfrak{S}(f, g)\|_{L^p(\Ha)}\lesssim \|f\|_{L^{p_1}(\Ha)}\|g\|_{L^{p_2}(\Ha)},$$
for $\frac{1}{p}=\frac{1}{p_1}+\frac{1}{p_2}.$ Interpolating these with the estimates 
$\mathfrak{S}: L^{1}(\Ha)\times L^{p_2}(\Ha)\to L^{s}(\Ha),$ for $s\in [\frac{p_2}{p_2+1}, 1],$
and $\mathfrak{S}: L^{p_1}(\Ha)\times L^{1}(\Ha)\to L^{s}(\Ha),$ for $s\in [\frac{p_1}{p_1+1}, 1]$ with any $p_{1},p_2>\frac{d}{d-1},$ from Lemma~\ref{KS} we conclude that $\mathfrak{S}$ is bounded from $L^{p_1}(\Ha)\times L^{p_2}(\Ha)$ to $L^{p}(\Ha),$ for $(\frac{1}{p_1}, \frac{1}{p_2})$ in the open pentagon $\mathcal{D}$ including the boundary line segments $[O,A], [A,E),$  $[D,F)$ and $[D,O]$ and $\frac{1}{p}=\frac{1}{p_1}+\frac{1}{p_2}$.

\end{proof}

\section{Full maximal function}
\label{sec:full}

This section contains the proof of the sharp Lebesgue space estimates for the bilinear Nevo--Thangavelu maximal function, Theorem~\ref{full}. The reader will notice, while decoupling the bilinear averages using the slicing techniques, the Birkhoff averages over linear spherical means appear naturally. 

\begin{proof}[Proof of Theorem~\ref{full}] 
We start with the demonstrating the slicing argument in this case. Let $r>0$ be any fixed positive number. Then using \eqref{slicing-formula} we obtain the following:
\begin{equation}\label{slicing formula}
    \begin{split}
     &\mathfrak{S}_r(f,g)(x)\\
     &=\int_{B^{2n}(0,1)}f(x. \delta_{r}(z_1, 0)^{-1})\int_{S^{2n-1}}  g(x. \delta_{r}(( { \textstyle \sqrt{1-\|z_1\|^2} z_{2} } , 0)^{-1})\, d\sigma_{2n-1}(z_2) { \textstyle (1-\|z_1\|^2)^{n-1} dz_{1} }  \\
     &=\int_{0}^{1}\int_{S^{2n-1}} f(x. \delta_{r}(s\omega, 0))  d\sigma_{2n-1}(\omega)\\
     &  \ \ \ \ \ \ \ \ \ \ \ \  \times   \int_{S^{2n-1}} g(x. \delta_{r}(\sqrt{1-s^2} z_{2}, 0)^{-1})d\sigma_{2n-1}(z_2) (1-s^2)^{(n-1)} s^{2n-1} \, ds\\
     &=\int_{0}^{1} s^{2n-1}(1-s^2)^{n-1} A_{rs}f(x) A_{r\sqrt{1-s^2}}g(x)\, ds\\
     &\leq M_{S}(f)(x)\, \int_{0}^{1} s^{2n-1}(1-s^2)^{n-1} A_{r\sqrt{1-s^2}}g(x)\, ds\\
     &= M_{S}(f)(x)\, \int_{0}^{1} s^{2n-2}(1-s^2)^{n-1} A_{r\sqrt{1-s^2}}g(x)\, sds.
    \end{split}
\end{equation}
Performing a change of variable $s\to r^{-1}\sqrt{(r^2-s^2)}$ we obtain that $\mathfrak{S}_{r}(f, g)(x)$ is dominated by 
$$M_{S}(f)(x) \frac{1}{r^2}\int_{0}^{r}   {  \Big(\frac{r^2-t^2}{r^2}\Big)^{n-1} \Big(\frac{t^2}{r^2}\Big)^{n-1}   }  A_{t}(g)(x)\, t\, dt\lesssim M_{S}(f)(x) \frac{1}{r}\int_{0}^{r} A_{t}g(x)\, dt.$$

Now taking supremum over $r>0,$ we obtain that $$\mathfrak{M}_{\text{full}}(f, g)(x)\leq C M_{S}(f)(x)\Lambda(g)(x),$$ 
where $M_{S}$ is the Nevo--Thangavelu maximal operator and $\Lambda$ is the maximal operator over uniform averages in \eqref{Birkhoff avg}. Interchanging the role of $f$ and $g,$ we conclude 
\begin{align}
\label{Slicing-full}
\mathfrak{M}_{\text{full}}(f, g)(x)\leq C \min\{M_{S}(f)(x)\Lambda(g)(x), M_{S}(g)(x)\Lambda(f)(x)\}.    
\end{align}

Thus, if $\frac{d-1}{d-2} < p_{1} \leq \infty, 1< p_{2} \leq \infty$ then from H\"older's inequality we obtain
\begin{equation}\label{full: max 1}
   \| \mathfrak{M}_{\text{full}}(f, g) \|_{L^{p}(\Ha)} \lesssim  \| M_{S}f \|_{L^{p_{1}}(\Ha)} \, \| \Lambda g \|_{L^{p_{2}}(\Ha)}  \lesssim \| f \|_{L^{p_{1}}(\Ha)} \, \| g \|_{L^{p_{2}}(\Ha)}.
\end{equation}
Similarly, for $1< p_{1} \leq \infty, \frac{d-1}{d-2} < p_{2} \leq \infty,$ we can use the estimate $\mathfrak{M}_{\text{full}}(f, g)(x)\leq C \Lambda(f)(x) M_{S}g(x)$ and H\"older's inequality to obtain 
\begin{equation}\label{full: max 2}
   \| \mathfrak{M}_{\text{full}}(f, g) \|_{L^{p}(\Ha)} \lesssim \| \Lambda f \|_{L^{p_{1}}(\Ha)} \| M_{S}g \|_{L^{p_{2}}(\Ha)} \,   \lesssim \| f \|_{L^{p_{1}}(\Ha)} \, \| g \|_{L^{p_{2}}(\Ha)}.
\end{equation}
Now a standard linearization technique and complex interpolation between \eqref{full: max 1} and \eqref{full: max 2} we obtain that (\ref{full: max 1}) holds when $(\frac{1}{p_{1}}, \frac{1}{p_{2}})$ belong to the open pentagon with the corners $(0,0)$, $(0,1)$, $(\frac{d-2}{d-1},1)$, $(1,\frac{d-2}{d-1})$ and $(1,0)$ with half open line segments $[O,A)$ and $[O,D)$ included.

\end{proof}

\section{Lacunary maximal function}
\label{sec:lac}
This section contains all the details of the boundedness of the lacunary maximal function $\mf{M}_{\text{lac}}.$ As mentioned earlier, a key ingredient in the proof is the $L^2(\Ha)\times L^2(\Ha)\to L^1(\Ha)$ decay estimate for ``localized" pieces of the averaging operator $\mf{M}.$ Before proceeding further, we borrow some terminology from \cite{HickmanJFA}. A locally finite measure $\nu$ on $\Ha$ is said to satisfy the \textit{Curvature assumption} (CA) if there exists  natural number $N$ such that the iterated convolution $\nu^{(N)}$ is absolutely continuous with respect to the Haar measure of $\Ha$ and there exists $\gamma>0, C_{\mu}>0$ such that the Radon-Nikodym derivative of $\nu^{(N)}$, say $h,$ satisfies 
\begin{align}
\label{CA}
\int_{\Ha}\big(|h(xy^{-1})-h(x)|+|h(y^{-1} x)-h(x)|\big)\, dx\leq C_{\mu}\, |y|^{\gamma}   
\end{align}
for all $y\in \Ha.$ The iterated convolutions are defined as $\nu^{(2k+1)}=\nu^{(2k)}\ast \tilde{\nu}$ and $\nu^{(2k)}=\nu^{(2k-1)}\ast \nu$ for $k\in \mathbb{N}.$ Moreover, from Lemma~3.3 in \cite{HickmanJFA} we know that the surface measure of the horizontal sphere $S^{2n-1}\times \{0\},$ $\mu=\sigma_{2n-1}\ast \delta_{0}$ in $\Ha$, satisfies \eqref{CA}. We shall frequently make use of the following scaling relations:
\begin{equation*}
\mf{S}_{r}(f,g)= \mf{S} (f \circ \delta_{r}, g \circ \delta_{r}) \circ \delta_{1/r},  \quad  \text{and}\quad
    (f \ast \psi_{t}) \circ \delta_{r}= f \circ \delta_{r} \ast \psi_{t/r}.
\end{equation*}
By slicing argument we have already seen that 
$$\mf{S}(f, g)(x)= \int_{0}^{1}  s^{2n-1}(1-s^2)^{n-1} A_{s}f(x)\, A_{\sqrt{1-s^2}}g(x)\, ds.$$
The next step is to introduce a Littlewood-Paley decomposition (see Proposition~\ref{LP}) of $f$ and $g$ to write $f=\sum_{j} f\ast \psi_{2^j}$ and $g=\sum_{k} g\ast \psi_{2^k}$. Consequently, 
$$\mf{S}(f, g)(x)=\sum\limits_{k, j} \mf{S}^{j, k}(f, g)(x),$$ where
\begin{equation*}
    \mf{S}^{j, k}(f, g)(x)= \int_{0}^{1}  s^{2n-1}(1-s^2)^{n-1} A_{s} (f \ast \psi_{2^{j}})(x)\, A_{\sqrt{1-s^2}} ( f\ast \psi_{2^{k}})(x)\, ds. 
\end{equation*}

Now we state our main decay estimate.
\begin{prop}\label{lac piece 22 to 1} There exists $\delta>0$ so that
 \begin{equation}
    \|\mf{S}^{j,k}(f,g)\|_{L^1(\Ha)}\lesssim 2^{(j+k)\delta}\|f\|_{L^2(\Ha)}\|g\|_{L^2(\Ha)},
\end{equation}   
for all integers $j,k$.
\end{prop}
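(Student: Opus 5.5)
The plan is to decouple the two functions with Cauchy--Schwarz and reduce everything to a \emph{linear} $L^2$ decay estimate for the frequency-localised Nevo--Thangavelu means. The exponent is negative here since $2^j$ is a spatial scale: small $j$ is high frequency, and the estimate must give decay as $j,k\to-\infty$.

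\textbf{Step 1 (Cauchy--Schwarz).} Starting from the sliced representation of $\mf{S}^{j,k}$, Minkowski and Cauchy--Schwarz give
\begin{equation*}
\|\mf{S}^{j,k}(f,g)\|_{L^1(\Ha)}\le \int_0^1 s^{2n-1}(1-s^2)^{n-1}\,\|A_s(f\ast\psi_{2^j})\|_{2}\,\|A_{\sqrt{1-s^2}}(g\ast\psi_{2^k})\|_{2}\,ds .
\end{equation*}

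\textbf{Step 2 (linear decay).} The core claim is that there is $\varepsilon>0$ with
\begin{equation*}
\|A_s(f\ast\psi_{t})\|_{L^2(\Ha)}\lesssim \min\{1,(t/s)^{\varepsilon}\}\|f\|_{L^2(\Ha)}
\end{equation*}
for all $s,t>0$.
\begin{itemize}
\item The bound $1$ is trivial, since $\|\psi_t\|_1\lesssim 1$ and $A_s$ is an average.
\item The scaling relations $(f\ast\psi_t)\circ\delta_s=(f\circ\delta_s)\ast\psi_{t/s}$ and $A_s F=(A(F\circ\delta_s))\circ\delta_{1/s}$ reduce matters to $s=1$, i.e.\ to $\|T_t\|_{2\to2}\lesssim t^{\varepsilon}$ for $t\le 1$, where $T_tf=f\ast\psi_t\ast\mu$.
\item For this I would use the $T^*T$ argument of \cite{HickmanJFA}. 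Iterating $\|T_t\|^{2^m}=\|(T_t^*T_t)^{2^{m-1}}\|$ with $2^m\ge N$, the operator $(T_t^*T_t)^{2^{m-1}}$ is right convolution by a kernel containing $N$ consecutive copies of $\mu,\tilde\mu$, hence the density $h$ of $\mu^{(N)}$, with factors of $\psi_t$ or $\tilde\psi_t$ on at least one side.
\item Young's inequality bounds the $L^2$ operator norm by the $L^1$ norm of the kernel. All factors other than one $\psi_t$ adjacent to $h$ are bounded in $L^1$.
\item For that factor, the key is that $\int\psi_t=0$ (because $\int\phi_r=1$ for all $r$). Hence
\begin{equation*}
\|\psi_t\ast h\|_1=\Big\|\int\psi_t(y)\big(h(y^{-1}\cdot)-h(\cdot)\big)dy\Big\|_1\le\int|\psi_t(y)|\,C_\mu|y|^{\gamma}dy\lesssim t^{\gamma},
\end{equation*}
by the curvature assumption \eqref{CA}, which holds for $\mu$ by Lemma~3.3 of \cite{HickmanJFA}. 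The case with $h$ on the other side uses the other half of \eqref{CA}.
\item This yields $\varepsilon=\gamma/2^m$.
\end{itemize}

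\textbf{Step 3 (integrate in $s$).} I would split $[0,1]$ at $s=1/\sqrt2$. On $[0,1/\sqrt2]$ we have $\sqrt{1-s^2}\ge 1/\sqrt2$, so the $g$-factor is $\lesssim\min\{1,2^{k\varepsilon}\}$. The $f$-factor integrates to $\int_0^{1}s^{2n-1}\min\{1,(2^j/s)^{\varepsilon}\}ds\lesssim\min\{1,2^{j\varepsilon}\}$, which converges since $2n-1-\varepsilon>-1$. The other half is symmetric, using the weight $(1-s^2)^{n-1}$ and substituting $u=\sqrt{1-s^2}$. Altogether
\begin{equation*}
\|\mf{S}^{j,k}(f,g)\|_{L^1(\Ha)}\lesssim \min\{1,2^{j\varepsilon}\}\min\{1,2^{k\varepsilon}\}\|f\|_{L^2(\Ha)}\|g\|_{L^2(\Ha)}.
\end{equation*}
This is $\lesssim 2^{(j+k)\delta}$ for any $0<\delta\le\varepsilon$. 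If $j,k\le0$ it equals $2^{(j+k)\varepsilon}$. If $j\le 0<k$ it equals $2^{j\varepsilon}$, and $j\varepsilon\le (j+k)\delta$ because $j(\varepsilon-\delta)\le 0\le k\delta$. If both are positive it is trivial.

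\textbf{Main obstacle.} The delicate point is Step 2 in the non-commutative setting. One must arrange the iterated $T^*T$ product so that a mean-zero $\psi_t$ sits directly next to the absolutely continuous $\mu^{(N)}$, and then apply the correct left or right translation half of \eqref{CA}. I would write $(T_t^*T_t)^{M}$ explicitly, group the $\mu$'s into $\mu^{(N)}$ (checking the alternating pattern $\mu,\tilde\mu$ matches the definition of $\nu^{(N)}$), and bound the remaining factors crudely in $L^1$.
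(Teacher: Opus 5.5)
Your Steps 1 and 3 are fine. The gap is in Step 2, exactly at the point you flagged as the main obstacle, and the mechanism you propose does not resolve it.

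\textbf{Where Step 2 fails.} With $T_tf=f\ast\psi_t\ast\mu$ you have $T_t^*T_tf=f\ast\psi_t\ast\mu\ast\tilde\mu\ast\tilde\psi_t$. Hence $(T_t^*T_t)^{M}$ is right convolution with $(\psi_t\ast\mu\ast\tilde\mu\ast\tilde\psi_t)^{\ast M}$. In this kernel every pair $\mu\ast\tilde\mu$ is separated from the next by $\tilde\psi_t\ast\psi_t$. Convolution on $\Ha$ is non-commutative, and $\psi_t$ need not commute with $\mu$ (no radiality of $\phi$ is assumed), so these factors cannot be regrouped. The kernel therefore never contains more than two consecutive copies of $\mu,\tilde\mu$. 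The identity $\|T_t\|^{2^m}=\|(T_t^*T_t)^{2^{m-1}}\|$ is true but useless here: it never places $\psi_t$ next to the density $h$ of $\mu^{(N)}$. It would only work if (CA) already held with $N\le 2$, which you have not shown; Lemma~3.3 of \cite{HickmanJFA} only supplies some $N$.

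\textbf{The missing idea.} You need a bootstrapping $T^*T$ in which the number of $\mu$'s adjacent to $\psi_t$ grows by one at each step; this is what the paper does. Put $A^{(m)}f=f\ast\psi_t\ast\mu^{(m)}$. Then
$(A^{(m)})^*A^{(m)}f=f\ast\psi_t\ast\mu^{(m)}\ast\widetilde{\mu^{(m)}}\ast\tilde\psi_t=(f\ast\psi_t\ast\mu^{(m+1)})\ast(\widetilde{\mu^{(m-1)}}\ast\tilde\psi_t)$.
Young's inequality gives $\|A^{(m)}\|^2\le\|A^{(m+1)}\|\,\|\psi_t\ast\mu^{(m-1)}\|_1\lesssim\|A^{(m+1)}\|$. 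Iterating up to $m=N$ yields $\|A^{(1)}\|\lesssim\|\psi_t\ast\mu^{(N)}\|_1^{2^{-(N-1)}}\lesssim t^{\gamma 2^{-(N-1)}}$. At that point your cancellation-plus-(CA) computation applies verbatim.

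\textbf{An alternative repair.} You can keep your idea of taking powers, but apply it to the positive operator $Sf=f\ast\mu\ast\tilde\mu$ rather than to $T_t^*T_t$. By the spectral theorem, $\langle Sg,g\rangle\le\langle S^Mg,g\rangle^{1/M}\|g\|_2^{2-2/M}$. Take $g=f\ast\psi_t$; then $S^M$ is convolution with $2M$ consecutive factors, so $\|T_t\|^2\lesssim\|\psi_t\ast(\mu\ast\tilde\mu)^{\ast M}\|_1^{1/M}$ with $2M\ge N$.

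With either repair, the rest of your argument goes through.
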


\begin{remark}
The reader should note that the above estimate is only useful for integers $j, k<0.$ This should not be surprising as the negative indices correspond to the ``high frequencies" in an analogy with the Euclidean case.       
\end{remark}

\begin{proof}
Using Minkowski and H\"older inequality, we obtain   
\begin{equation}
    \begin{split}
& \|\mf{S}^{j,k}(f, g)\|_{L^1(\Ha)} \\
&\leq \int_{0}^{1}  s^{2n-1}(1-s^2)^{n-1} \| A_{s} (f \ast \psi_{2^{j}})\, A_{\sqrt{1-s^2}} ( g\ast \psi_{2^{k}})   \|_{L^1(\Ha)}\, ds\\
&\leq \int_{0}^{1}  s^{2n-1}(1-s^2)^{n-1} \|   A_{s} (f \ast \psi_{2^{j}}) \|_{L^2(\Ha)}\, \| A_{\sqrt{1-s^2}} ( g\ast \psi_{2^{k}})    \|_{L^2(\Ha)}\, ds.        
    \end{split}
\end{equation}
Suppose for the moment that we could prove the following claim:\\
\noindent\textbf{Claim:}  There exists some $0< \delta < 2n$ such that 
\begin{equation} \label{jth piece f}
    \|A_{s} (f \ast \psi_{2^{j}})\|_{L^2(\Ha)}\leq C (2^{j}/s)^{\delta} \| f \|_{L^2(\Ha)}.
\end{equation}
Then we would have
\begin{equation*}
   \begin{split}
  &\|\mf{S}(f, g)\|_{L^1(\Ha)}\\
 & \lesssim \|f\|_{L^2(\Ha)}\|g\|_{L^2(\Ha)}  \,  \int_{0}^{1}  s^{2n-1}(1-s^2)^{n-1} 2^{j\delta} s^{-\gamma} 2^{k\delta} (1-s^2)^{-\gamma /2} ds\\
 &\lesssim \|f\|_{L^2(\Ha)}\|g\|_{L^2(\Ha)} \,  2^{\delta(j+k)},   
   \end{split} 
\end{equation*}
for $\delta<2n$.

It thus remains to prove the (\ref{jth piece f}) in the claim. As the Fourier transform on the Heisenberg group is operator valued, we shall pursue the approach based on the iterated $T^{\ast} T$ method.
By scaling, $A_{s} (f \ast \psi_{2^{j}}) = A (f \circ \delta_{s} \, \ast \, \psi_{2^{j}/s}) \circ \delta_{1/s} $, the estimate  (\ref{jth piece f}) is equivalent to
\begin{equation} \label{jth piece f II}
    \|A (f \ast \psi_{2^{j}/s})\|_{L^2(\Ha)}\leq  C ( 2^{j}/s)^{ \delta} \| f \|_{L^2(\Ha)},
\end{equation}
where $A$ denotes $A_{1}.$ So by $T^{\ast} T$ method it suffices to show that
 \begin{equation} \label{jth piece f III}
     \|f \ast \psi_{2^{j}/s} \ast \mu  \ast \tilde{ \mu} \ast \tilde{\psi}_{2^{j}/s}  \|_{L^2(\Ha)}\leq  (C \, 2^{j}/s)^{2 \delta} \| f \|_{L^2(\Ha)}.
 \end{equation}
To this end, let $N \geq 1$ be such that the convolution of $\mu$ iterated $N$ times, denoted as $\mu^{(N-1)}$, is absolutely continuous with respect to the Haar measure on $\mathbb{H}^n.$ We define the operator $A^{(N)}f :=f\ast \psi_{2^{j}/s}\ast \mu^{(N)}$, and observe that
\begin{equation*}
  (A^{(N)})^{\ast}  A^{(N)} f = f \ast \psi_{2^{j}/s} \ast \mu^{(N+1)}  \ast \widetilde{ \mu^{(N-1)} } \ast \tilde{\psi}_{2^{j}/s}.
\end{equation*}
Young’s inequality now concludes
\begin{equation*}
 \begin{split}
\scriptsize \| (A^{(N)})^{\ast}  A^{(N)} f\|_{L^2(\Ha)}
& \scriptsize \leq \|   A^{(N+1)} \|_{{L^2(\Ha)}\to {L^2(\Ha)}} \,  \| \psi_{2^{j}/s} \ast \mu^{(N-1)} \|_{L^1(\Ha)}  \, \| f \|_{L^2(\Ha)},
 \end{split}
\end{equation*}
whose recursive application yields the following  operator norm estimate
\begin{equation*}
    \| A^{(1)}  \|_{{L^2(\Ha)}\to {L^2(\Ha)}} \lesssim_{N} \| \psi_{2^{j}/s} \ast \mu^{(N-1)} \|_{L^1(\Ha)}^{1/ 2^{N}}.
\end{equation*}
Whereas, the $L^1$ norm $\| \psi_{2^{j}/s} \ast \mu^{(N-1)} \|_{1}$ can be estimated using the cancellation present in $\psi$ and by the \textit{curvature condition \eqref{CA}} on $\mu,$ as 
\begin{equation*}
\begin{split}
\int_{\Ha} &  \left|\int_{\Ha}\left( \mu^{(N-1)}(y^{-1} \cdot x) - \mu^{(N-1)}(x)\right) \psi_{2^{j}/s}(y)\, dy \right|\,\ud x \\
& \lesssim \int_{\Ha}|y|^{\gamma} |\psi_{2^{j}/s}|(y)\lesssim  (2^{j}/s)^{\gamma},     
\end{split}    
\end{equation*}
which gives $ \| A^{(1)} f \|_{L^2(\Ha)} \lesssim_{N} (2^{j}/s)^{\gamma / N} \| f \|_{L^2(\Ha)}$, as it was required in (\ref{jth piece f III}). 
\end{proof}

\subsection{Single scale decay estimates throughout $\mathcal{D}$}
 Next, we interpolate this decay estimate (when $j,k<0$) with the following single-scale estimates coming from Theorem \ref{single-scale-thm}. As the functions $\psi_{2^{\ell}}$ are $L^1$ normalized, Theorem \ref{single-scale-thm} followed by the Young's inequality implies that 
 \begin{equation}
 \label{Propagation:1}
    \|\mathfrak{S}^{j,k}(f,g)\|_{p} \lesssim  \|f\ast \psi_{2^j}\|_{ p_{1} } \|g\ast \psi_{2^k}\|_{ p_{2}}\lesssim \|f\|_{ p_{1} } \|g\|_{ p_{2}},
\end{equation}
holds uniformly in $j,k$, for all $(\frac{1}{p_1}, \frac{1}{p_2})\in \mathcal{D}.$ Interpolation of \eqref{Propagation:1} with Proposition~\ref{lac piece 22 to 1} yields the following proposition.

\begin{prop}\label{lac piece p1 p2 to p in R} Let $n \geq 2$. Suppose $\big(\frac{1}{p_1}, \frac{1}{p_2}\big)$ is contained in the interior of the region $\mathcal{D}$ of  Theorem~\ref{single-scale-thm}. Let $0<p < \infty$ be in the H\"older relation with $p_{1},p_{2}$, that is, $\frac{1}{p} = \frac{1}{p_1} + \frac{1}{p_2}$. Then, there exists $\delta:=\delta(p_1, p_2, n)>0$ so that
 \begin{equation}
    \|\mathfrak{S}^{j,k}(f,g)\|_{p} \lesssim 2^{(j+k)\delta}\|f\|_{p_{1}} \|g\|_{p_{2}},
\end{equation}   
for all integers $j,k$.
\end{prop}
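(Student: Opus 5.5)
We will obtain Proposition~\ref{lac piece p1 p2 to p in R} by bilinear interpolation between two families of estimates for the fixed pieces $\mathfrak{S}^{j,k}$. The first is the decay estimate of Proposition~\ref{lac piece 22 to 1} at the point $(\frac12,\frac12)$ with target $L^1$. The second is the uniform bound \eqref{Propagation:1}, valid at every point of $\mathcal{D}$.

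\textbf{Reduction to nonpositive indices.} For $j,k\le 0$ the claim is a genuine decay estimate. When $j+k\ge 0$ we use \eqref{Propagation:1} directly, since then $2^{(j+k)\delta}\ge 1$. When exactly one of $j,k$ is positive and $j+k<0$, say $k>0>j$, Proposition~\ref{lac piece 22 to 1} still gives the factor $2^{(j+k)\delta}$, so the interpolation below works verbatim. (Alternatively one can apply the Claim only to the $f$-factor, getting $2^{j\delta}\lesssim 2^{(j+k)\delta'}$ after shrinking $\delta$.) Thus in all cases we interpolate a bound with constant $2^{(j+k)\delta_0}$ at $(\frac12,\frac12,1)$ against bounds with constant $O(1)$ at points of $\mathcal{D}$.

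\textbf{Interpolation step.} Fix $(\frac1{p_1},\frac1{p_2})$ in the interior of $\mathcal{D}$. Since the interior is open and contains $(\frac12,\frac12)$, we choose a point $(\frac1{q_1},\frac1{q_2})\in\mathcal{D}$ and $\theta\in(0,1)$ with
\[
\Big(\frac1{p_1},\frac1{p_2}\Big)=(1-\theta)\Big(\frac12,\frac12\Big)+\theta\Big(\frac1{q_1},\frac1{q_2}\Big).
\]
The line through $(\frac12,\frac12)$ and the target point, extended slightly past the target, stays in $\mathcal{D}$. Because the Hölder relation is affine, the target exponents also interpolate: $\frac1p=(1-\theta)\cdot 1+\theta\frac1q$ with $\frac1q=\frac1{q_1}+\frac1{q_2}$. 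Multilinear complex interpolation (Riesz--Thorin type, for instance via Calderón's method for bilinear operators) then gives
\[
\|\mathfrak{S}^{j,k}\|_{L^{p_1}\times L^{p_2}\to L^{p}}\lesssim 2^{(j+k)\delta_0(1-\theta)},
\]
so $\delta=(1-\theta)\delta_0>0$, with $\delta$ depending on $p_1,p_2,n$.

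\textbf{Main obstacle.} The hard step is justifying interpolation when $q<1$ or $p<1$, that is, when the target is a quasi-Banach space. Near $E$ and $F$ the exponent $p$ lies below $1$. Complex interpolation for multilinear operators with $L^p$ targets, $p<1$, is available (Grafakos--Kalton-type results, or interpolation of analytic families on simple functions using the log-convexity of $L^p$ quasinorms). Our plan is to invoke such a multilinear interpolation theorem for operators on simple functions. This is legitimate here because $\mathfrak{S}^{j,k}$ is a positive-kernel bilinear operator (up to the signed $\psi$), and it is bounded on simple functions uniformly at the endpoints.

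If that route is problematic, we would fall back on real (Marcinkiewicz-type) multilinear interpolation. That requires restricted weak-type endpoints at three noncollinear points, which we can choose inside $\mathcal{D}$ around the target. The cost is that the endpoint constants become mixed: two points carry $O(1)$ and one carries the decay factor. The output constant is the geometric mean of the endpoint constants, so it still decays like $2^{(j+k)\delta}$ for some $\delta>0$. The open interior assumption is exactly what lets us surround the target point in this way.
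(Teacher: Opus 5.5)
Your proposal is correct and follows the paper's route: the paper proves this proposition in one line, by interpolating the uniform bounds \eqref{Propagation:1} on $\mathcal{D}$ with the $L^2\times L^2\to L^1$ decay of Proposition~\ref{lac piece 22 to 1}, which yields $\delta=(1-\theta)\delta_0$ exactly as in your argument. Your case split on the signs of $j,k$ is unnecessary, since the interpolated constant $2^{(j+k)(1-\theta)\delta_0}$ holds for all $j,k$. Your explicit attention to quasi-Banach targets when $p<1$, which the paper passes over silently, is appropriate. However, the appeal to positivity there is irrelevant: $\mathfrak{S}^{j,k}$ is signed, and multilinear complex interpolation into $L^p$ with $p<1$ does not require positivity.
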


\subsection{Proof of Theorem \ref{Horizontal lac result}}
\medskip

We now have enough ingredients to prove Theorem \ref{Horizontal lac result}. Consider first proving the boundedness of $\mathfrak{M}_{\mathrm{lac}}$ when the points $(\frac{1}{p_{1}}, \frac{1}{p_{2}})$ are in the interior of the diagram $\mathcal{R}$. We  will include the remaining boundary points of $\mathcal{R}$ later in the proof.

Let $1 \leq p < \infty$. Fix an $\ell \in \mathbb{Z}$. By non-negativity of $\mathfrak{M}_{\mathrm{lac}}$, we may assume $f,g$ are non-negative.  We begin as before with a decomposition of identity from Lemma \ref{LP}, which we can rewrite as
\begin{equation*}
 f=  f\ast \phi_{2^{\ell}}  + \sum_{j<0} f\ast \psi_{2^{j + \ell}}  \ \ \ \text{and} \ \ \   g=  g \ast \phi_{2^{\ell}}  + \sum_{k<0} g\ast \psi_{2^{k+l}}.
\end{equation*}
We thus decompose the single lacunary average, $\mathfrak{S}_{2^{l}}$, as
\begin{equation} \label{Hor l piece decom}
\begin{split}
\mathfrak{S}_{2^{\ell}}(f, g)(x)=\mathfrak{S}_{2^{\ell}}(f  \ast \phi_{2^{\ell}}, g) & + \mathfrak{S}_{2^{\ell}}(f, g\ast \phi_{2^{\ell}}) - \mathfrak{S}_{2^{\ell}}(f\ast \phi_{2^{\ell}}, g \ast \phi_{2^{\ell}}) \\
& + \sum_{ j, k<0} \mathfrak{S}_{2^{l}}(f \ast \psi_{2^{j+ \ell}}, g \ast \psi_{2^{k + \ell}}).  
\end{split} 
\end{equation}

We notice that the slicing analysis, as in (\ref{slicing formula}), reveals that we may dominate the first term by the product of Hardy-Littlewood maximal function $M_{\mathrm{HL}}(f)$, and the maximal function $\Lambda(g)$ associated to the uniform averages of the measure $\mu_{s}$. To see this, from scaling
\begin{equation*}
\mathfrak{S}_{2^{\ell}}(f\ast \phi_{2^{\ell}}, g) = \mathfrak{S} (f \circ \delta_{2^{\ell}} \, \ast \, \phi, g \circ \delta_{2^{\ell}}) \circ \delta_{1/2^{\ell}}    
\end{equation*}
it suffices to show
\begin{equation*}
|\mathfrak{S}( f \ast \phi, g)(x)| \lesssim M_{\mathrm{HL}}(f)(x) \ \Lambda(g)(x),   
\end{equation*}
because by rescaling it, the first term is then bounded by the product  $M_{\mathrm{HL}}(f \circ \delta_{2^{\ell}}) ( \delta_{2^{- \ell} } x)$ $   \Lambda (  g \circ \delta_{2^{\ell}}) ( \delta_{2^{- \ell} } x)$ which is further bounded by $M_{\mathrm{HL}}(f)(x) \ \Lambda(g)(x)$.

Then from  properties of $\phi$, and the slicing, the left hand side is
\begin{equation*}\label{slicing formula ergodic dom}
    \begin{split}
     |\mathfrak{S}( f \ast \phi, g)(x)|
     & \leq \int_{0}^{1} s^{2n-1}(1-s^2)^{n-1} |A_{\sqrt{1-s^2}}(f \ast \phi)(x)|\, A_{s}(g)(x)  ds\\
     & \lesssim \int_{0}^{1} s^{2n-1}(1-s^2)^{n-1} (f \ast  \chi_{B(0, C +  \sqrt{1-s^2})})(x)\, A_{s}(g)(x)  ds\\
     &\leq M_{\mathrm{HL}}(f)(x)\, \int_{0}^{1} s^{2n-1}(1-s^2)^{n-1} A_{s}(g)(x) \, ds\\
     &\leq M_{\mathrm{HL}}(f)(x)\,   \int_{0}^{1}  (g \ast \mu_{s})(x)\, ds  \\
     &\leq M_{\mathrm{HL}}(f)(x)\, \sup_{r>0} \frac{1}{r} \int_{0}^{r}  (g \ast \mu_{s})(x)\, ds  \\
     & = M_{\mathrm{HL}}(f)(x)\, \Lambda(g)(x).
    \end{split}
\end{equation*}
Performing a similar set of computations, we may control the second term in (\ref{Hor l piece decom}) by $\Lambda(f)(x) \, M_{\text{HL}}g(x).$ Finally, the third term there can also be controlled by
$ M_{\mathrm{HL}}(f)(x) \, M_{\mathrm{HL}}(f)(x)$. 

Altogether, from H\"older's inequality and the $L^{q}(\Ha)$-boundedness of Hardy-Littlewood maximal function and that of $\Lambda$, see Proposition \ref{Birkhoff avg},  for all $1<q \leq \infty$, we have
\begin{equation}
\begin{split}
 \| \sup_{\ell} \left\lvert \mathfrak{S}_{2^{\ell}}(f  \ast \phi_{2^{\ell}}, g)  + \mathfrak{S}_{2^{\ell}}(f, g\ast \phi_{2^{\ell}}) - \mathfrak{S}_{2^{\ell}}(f\ast \phi_{2^{\ell}}, g \ast \phi_{2^{\ell}}) \right\rvert \|_{p} \lesssim \| f \|_{p_{1}} \, \| g \|_{p_2}.   
\end{split}    
\end{equation}

Next, the treatment of the high frequencies in (\ref{Hor l piece decom}), can be done by an argument essentially due to Michael Christ (see for instance, \cite{BF24}). For each $L\in \mathbb{N},$ let $\Upsilon(L)$ be the smallest constant such that
\begin{equation*}
{\textstyle  \left\| \sup_{|\ell| \leq L}  \left\lvert   \mathfrak{S}_{2^{l}}(f , g ) \right\rvert  \right\|_{p}  \leq \Upsilon(L)  \| f \|_{p_{1}} \, \| g \|_{p_2} },
\end{equation*}
then to show the boundedness of $\mathfrak{M}_{\text{lac}}$ it then suffices to show that $\Upsilon(L) \lesssim 1$. We have the following two vector valued inequalities: On one hand, from the positivity of $\mathfrak{S}_{2^{\ell}}$, majorization by Hardy-Littlewood maximal function gives
\begin{equation}\label{vect ineq I}
{\textstyle  \left\| \sup_{|\ell| \leq L}  \left\lvert    \mathfrak{S}_{2^{l}}(f_{\ell}, g_{\ell} ) \right\rvert  \right\|_{p}  \leq \Upsilon(L)  \| \sup_{|\ell| \leq L} |f_{\ell}| \|_{p_{1}} \, \| \sup_{|\ell| \leq L} |g_{\ell}| \|_{p_{2}} }.
\end{equation}
On the other hand the scale invariance of $\mathfrak{S}_{2^{\ell}}$ in conjugation with decay estimate of $\mathfrak{S}^{j,k}$ from  Proposition \ref{lac piece p1 p2 to p in R}, give
\begin{align} \label{vect ineq II}
\nonumber&\textstyle \left\| \left( \sum_{|\ell| \leq L}  \left\lvert   \mathfrak{S}_{2^{l}}(f_{\ell} \ast \psi_{2^{j + \ell }}, g_{\ell} \ast \psi_{2^{k + \ell }})  \right\rvert^{p} \right)^{1/p} \right\|_{p}\\
&\leq 2^{(j+k) \delta} \| \sum_{|\ell| \leq L} |f_{\ell}| \|_{p_{1}} \, \| \sum_{|\ell| \leq L} |g_{\ell}| \|_{p_{2}} .
\end{align}
Interpolation of (\ref{vect ineq I}) (with $f_{\ell}$ replaced by $ f_{\ell} \ast \psi_{2^{j+ \ell}}$), and (\ref{vect ineq II}), yields a key estimate
\begin{equation} \label{square fn typ}
\begin{split}
 & \textstyle \left\| \left( \sum_{|\ell| \leq L}  \left\lvert   \mathfrak{S}_{2^{l}}   (f_{\ell} \ast \psi_{2^{j + \ell}}, g_{\ell} \ast \psi_{2^{k + \ell}})  \right\rvert^{2p} \right)^{1/2p} \right\|_{p}  \\
 & \textstyle \leq 2^{(j+k) \delta /2} \Upsilon(L)^{1/2}  \| \left( \sum_{|\ell| \leq L} |f_{\ell}|^2 \right)^{1/2} \|_{p_{1}} \, \| \left( \sum_{|\ell| \leq L} |g_{\ell}|^2 \right)^{1/2} \|_{p_{2}} .    
\end{split}
\end{equation}

\noindent By applying (\ref{square fn typ}) to $f_{\ell}:= f \ast \psi_{2^{j^{\prime} + \ell}}$, in conjugation with union bound argument and Littlewood-Paley theory, we may bound
\begin{equation} \label{term to decay}
\begin{split}
\textstyle   \left\| \sup_{|\ell| \leq L}  \left\lvert   \mathfrak{S}_{2^{l}}(f \ast \psi_{2^{j^{\prime} + \ell}} \ast \psi_{2^{j + \ell}}, g_{\ell} \ast \psi_{2^{k + \ell}}) \right\rvert  \right\|_{p}
\end{split}    
\end{equation}
by  a positive constant times
\begin{equation}\label{decay in j}
\begin{split}
& \textstyle  2^{j \delta /2}  2^{k \delta /2} \Upsilon(L)^{1/2} \| \left( \sum_{|\ell| \leq L} |f \ast \psi_{2^{j^{\prime} + \ell}}|^2 \right)^{1/2} \|_{p_{1}} \, \| \left( \sum_{|\ell| \leq L} |g_{\ell}|^2 \right)^{1/2} \|_{p_{2}} \\
& \textstyle \lesssim 2^{j \delta /2}  2^{k \delta /2} \Upsilon(L)^{1/2} \| f \|_{p_{1}} \, \| \left( \sum_{|\ell| \leq L} |g_{\ell}|^2 \right)^{1/2} \|_{p_{2}}. 
\end{split}    
\end{equation}
Interchanging the role of $j$ and $j^{\prime}$ (that is assume (\ref{square fn typ}) holds for $j^{\prime}$ instead of $j$ and then take $f_{\ell}:= f \ast \psi_{2^{j + \ell}}$), we may also bound (\ref{term to decay}) by
\begin{equation*}\label{decay in j prime}
\begin{split}
 \textstyle  2^{j^{\prime} \delta /2}  2^{k \delta /2} \Upsilon(L)^{1/2} \| f \|_{p_{1}} \, \| \left( \sum_{|\ell| \leq L} |g_{\ell}|^2 \right)^{1/2} \|_{p_{2}}. 
\end{split}    
\end{equation*}
Taking the geometric mean of this with (\ref{decay  in j}), we obtain that (\ref{term to decay}) is bounded by
\begin{equation*}\label{decay j and j prime}
\begin{split}
 \textstyle    2^{(j+ j^{\prime}) \delta /2}  2^{k \delta /2} \Upsilon(L)^{1/2} \| f \|_{p_{1}} \, \| \left( \sum_{|\ell| \leq L} |g_{\ell}|^2 \right)^{1/2} \|_{p_{2}}.
\end{split}    
\end{equation*}
If we then sum this in $j<0$ and $j^{\prime}<0$, we obtain
\begin{equation} \label{term to decay in k}
\begin{split}
& \textstyle   \left\| \sup_{|\ell| \leq L}  \left\lvert \sum_{j, j^{\prime}<0} \,    \mathfrak{S}_{2^{l}}(f \ast \psi_{2^{j^{\prime} + \ell}} \ast \psi_{2^{j + \ell}}, g_{\ell} \ast \psi_{2^{k + \ell}}) \right\rvert  \right\|_{p} \\
& \textstyle \lesssim 2^{k \delta /2} \Upsilon(L)^{1/2} \| f \|_{p_{1}} \, \| \left( \sum_{|\ell| \leq L} |g_{\ell}|^2 \right)^{1/2} \|_{p_{2}}. 
\end{split}    
\end{equation}

\noindent A similar argument employed to (\ref{term to decay in k}), with $g_{\ell}:= g \ast \psi_{2^{k^{\prime} + \ell}}$ yield
\begin{equation} \label{term nothing to decay}
\begin{split}
& \textstyle   \left\| \sup_{|\ell| \leq L}  \left\lvert \sum_{j, j^{\prime} , k, k^{\prime}  <0} \,    \mathfrak{S}_{2^{l}}(f \ast \psi_{2^{j^{\prime} + \ell}} \ast \psi_{2^{j + \ell}}, g \ast \psi_{2^{k^{\prime} + \ell}} \ast \psi_{2^{k + \ell}}) \right\rvert  \right\|_{p} \\ &  \lesssim  \Upsilon(L)^{1/2} \| f \|_{p_{1}} \, \| g \|_{p_{2}}.  
\end{split}    
\end{equation}
 
Since, analogous to (\ref{Hor l piece decom}), we may also write
\begin{equation} \label{Hor l piece double decom}
\begin{split}
\mathfrak{S}_{2^{\ell}}(f, g)(x) & =\mathfrak{S}_{2^{\ell}}(f  \ast (2 \phi - \phi \ast \phi)_{2^{\ell}}, g)  + \mathfrak{S}_{2^{\ell}}(f, g\ast (2 \phi - \phi \ast \phi)_{2^{\ell}}) \\
& - \mathfrak{S}_{2^{\ell}}(f\ast (2 \phi - \phi \ast \phi)_{2^{\ell}}, g \ast (2 \phi - \phi \ast \phi)_{2^{\ell}}) \\
& \textstyle + \sum_{j, j^{\prime} , k, k^{\prime}  <0} \,    \mathfrak{S}_{2^{l}}(f \ast \psi_{2^{j^{\prime} + \ell}} \ast \psi_{2^{j + \ell}}, g \ast \psi_{2^{k^{\prime} + \ell}} \ast \psi_{2^{k + \ell}}).  
\end{split} 
\end{equation}
We may as before control the terms involving $\phi$ by maximal functions $M_{\mathrm{HL}}$ and $\Lambda$, and the last term by (\ref{term nothing to decay}) is controlled by constant times $\Upsilon(L)^{1/2} \| f \|_{p_{1}} \, \| g \|_{p_{2}}$.

As a consequence, we finally obtain the estimate
\begin{equation*}
{\textstyle  \left\| \sup_{|\ell| \leq L}  \left\lvert   \mathfrak{S}_{2^{l}}(f , g ) \right\rvert  \right\|_{p}  \leq \Upsilon(L)^{1/2}  \| f \|_{p_{1}} \, \| g \|_{p_2} },
\end{equation*}
which forces $ \Upsilon(L) \lesssim \Upsilon(L)^{1/2}$, thus concluding that
\begin{equation*}
  \Upsilon(L) \lesssim 1,
\end{equation*}
which is what we were aiming for. Finally, sub-additivity of $\| \cdot \|^{\min (1,p)}$ takes care of  the case $0<p<1$ as well. 

Now it only remains to consider the boundedness of $\mathfrak{M}_{\mathrm{lac}}$ for boundary exponents, $(\frac{1}{p_{1}}, \frac{1}{p_{2}})$, lying in either of the line segments $[O,A), $ and $[O, D)$. However, these are covered immediately due to the pointwise domination $\mathfrak{M}_{\mathrm{lac}}(f,g)(x) \leq \mathfrak{M}_{\mathrm{full}}(f,g)(x)$ and the bounds for $\mathfrak{M}_{\mathrm{full}}$ for these exponents described in Theorem \ref{full}. This completes the proof of the Theorem \ref{Horizontal lac result}.

\section{Sharpness}
\label{sec:sharp}
In this section, we show that the Theorem~\ref{full} is sharp. We construct Knapp type examples to conclude the sharpness. Our examples are motivated by \cite{LeeJFA} and \cite{Roos}. Our second result provides another necessary condition for the boundedness of the local maximal function $\mf{M}_{\text{loc}}.$

\begin{proof}[Proof of Proposition~\ref{nece:loc}]

Let $\delta>0$ be any small positive number and  we set
\[
f= \chi_{B^{e}(0,\delta)} \quad \text{and} \quad
g = \chi_{B^{e}(0,\delta)},
\]
where $B^{e}(0,\delta)$ represents the euclidean ball of radius $\delta$ centered at origin in $\mathbb{H}^n$. Let $x=(z, t)\in \mf{R}_{\delta},$ where \[
\mf{R}_{\delta}:= \left\{ (z, t) \in \Ha : \frac{1}{\sqrt{2}} \le \|z\| \le
\frac{1}{\sqrt{2}} + \kappa, |t|<\delta \right\},
\]
and $\kappa$ is sufficiently small but fixed positive number. We will show that for $x=(z, t)\in \mf{R}_{\delta},$
\begin{equation}
\label{3.8}
\mathfrak M_{\text{loc}}(f, g)(x)\ge \mathfrak{S}_{\sqrt{2}\|z\|}(f, g)(x)\geq C \delta^{4n-1}.
\end{equation}

Let $(z, t)\in \mathfrak{R}_{\delta},$ consider two sets
\[
E_{(z, t)}^{1} := \{z_{1}\in B^{2n}(0, 1) :
\|\hat{z} - \sqrt{2}z_{1} \|
\le c\delta \},\quad E_{(z, t)}^{2} :=
\{ z_{2} \in {S}^{2n-1}: \| \hat{z} - z_{2} \|
\le c' \delta\},
\]
here $\hat{z}$ represents the unit vector $\hat{z}:=z/\|z\|.$ By the slicing argument we obtain
\begin{align*}
&\mathfrak{S}_{\sqrt{2}\|z\|}(f,g)(x)\\
     &=\int_{B^{2n}(0,1)}f(x. \sqrt{2}\|z\|(z_1, 0)^{-1})\cdot \\
     &\int_{S^{2n-1}} g \left( x. \sqrt{2}\|z\|   \  (\sqrt{1-\|z_1\|^2} z_{2}, 0)^{-1}   \right)\,(1-\|z_1\|^2)^{n-1} d\sigma_{2n-1}(z_2) dz_{1}\\
     &\geq \int_{E^1_{(z, t)}}f(x. \sqrt{2}\|z\|(z_1, 0)^{-1})\cdot\\
     &\int_{E_{(z, t)}^{2}} g\big(x. \sqrt{2}\|z\|  \, (\sqrt{1-\|z_1\|^2} z_{2}, 0)^{-1}\big)\,(1-\|z_1\|^2)^{n-1} d\sigma_{2n-1}(z_2) dz_{1}.
\end{align*}

Observe a series of computations. Firstly, as $z_{1}\in E_{(z, t)}^{1}$, by triangle inequality, we obtain that $|\frac{1}{\sqrt{2}}-\|z_{1}\||\lesssim \|\hat{z}-\sqrt{2} z_1\|=O(\delta),$ and this in turn yields
\begin{align}
\nonumber&\big|\sqrt{1-\|z_1\|^2}-\frac{1}{\sqrt{2}}\big| \big|\sqrt{1-\|z_1\|^2}+\frac{1}{\sqrt{2}}\big|\\
&=\big(1-\|z_{1}\|^2-\frac{1}{2}\big)=\big(\frac{1}{2}-\|z_{1}\|^2\big)=\big|\frac{1}{\sqrt{2}}-\|z_1\|\big| \big|\frac{1}{\sqrt{2}}+\|z_1\|\big|=O(\delta).
\label{ex:sharp:1}
\end{align}
From \eqref{ex:sharp:1} it follows that
\begin{align}
\bigg|\sqrt{1-\|z_1\|^2}-\frac{1}{\sqrt{2}}\bigg|\lesssim \delta,    
\end{align}
and consequently, the term $\sqrt{1-\|z_1\|^2}\simeq \frac{1}{\sqrt{2}}.$ Moreover, by triangle inequality we obtain
$$\|z-\sqrt{2}\|z\|z_{1}\|\lesssim \|\hat{z}-\sqrt{2}z_{1}\|\lesssim \delta.$$
Now from the group law, it immediately implies that $f(x. \sqrt{2}\|z\| (z_1, 0)^{-1})=1,$ as $|t|<\delta.$ Similarly, we can show that $$\|z-\sqrt{2}\|z\| \sqrt{1-\|z_1\|^2} z_{2}\|=O(\|\hat{z}-z_2\|)\lesssim \delta,$$
 and hence $g(x. \sqrt{2}\|z\|((\sqrt{1-\|z_1\|^2} z_{2}, 0)^{-1})=1.$ Combining all these, we obtain that $$\mathfrak{M}_{\text{loc}}(f, g)(z, t)\geq \mathfrak{S}_{\sqrt{2}\|z\|}(f, g)(z, t)\geq |E^1_{(z, t)}|\, \sigma_{2n-1}(E^2_{(z, t)})\simeq \delta^{4n-1}.$$ 

As a consequence, we obtain \begin{align}
\delta^{4n-1}\delta^{1/p}\lesssim \delta^{4n-1}|\mf{R}_{\delta}|^{1/p}\lesssim C \delta^{(2n+1)(\frac{1}{p_1}+\frac{1}{p_2})}.  
\end{align}
Letting $\delta\to 0^+,$ we obtain
$$(4n-1)+\frac{1}{p}-(2n+1)(\frac{1}{p_1}+\frac{1}{p_2})\geq 0\iff \frac{1}{p_1}+\frac{1}{p_2}\leq \frac{4n-1}{2n+1}+\frac{1}{p(2n+1)}.$$
This completes the proof.
\end{proof}

We finally record the following necessary condition for the boundedness of $\mathfrak{M}_{\text{loc}}.$ 
\begin{prop}
\label{Nece:prop:2}
Let $1\leq p_1, p_2\leq \infty$ and $0<p<\infty.$ If we have
\[\|\mathfrak{M}_{\rm{loc}}(f, g)\|_{L^p(\Ha)}\leq C \|f\|_{L^{p_{1}}(\Ha)} \|g\|_{L^{p_{2}}(\Ha)},\]
then we must have
\begin{align}
 \frac{2}{p_1}+\frac{2}{p_2}\leq 1+\frac{2n
+1}{p}.  
\end{align}
\end{prop}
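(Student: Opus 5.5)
The plan is a Knapp-type test in which the functions live on a set of \emph{codimension two}, and the averages are tested at a single radius, $r=\sqrt{2}\in[1,2]$. Let $\delta>0$ be small and set
\[
f=g=\chi_{F_\delta},\qquad F_\delta:=\big\{(w,s)\in\Ha:\ \big|\|w\|-1\big|<\delta,\ |s|<\delta\big\}.
\]
This is a $\delta$-neighbourhood of the horizontal unit sphere $S^{2n-1}\times\{0\}$. It has codimension two in $\Ha$, so $|F_\delta|\simeq\delta^{2}$ and $\|f\|_{p_1}\|g\|_{p_2}\simeq\delta^{2/p_1+2/p_2}$. The test region is the small Euclidean box $\mf{R}'_\delta:=\{(z,t):\|z\|<\delta,\ |t|<\delta\}$, with $|\mf{R}'_\delta|\simeq\delta^{2n+1}=\delta^d$. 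The goal is the pointwise bound
\[
\mf{M}_{\rm loc}(f,g)(x)\geq\mf{S}_{\sqrt2}(f,g)(x)\gtrsim\delta,\qquad x\in\mf{R}'_\delta .
\]
Given this, the hypothesis yields $\delta\cdot\delta^{d/p}\lesssim\delta^{2/p_1+2/p_2}$. Letting $\delta\to0^+$ then forces $1+\frac{d}{p}\geq\frac{2}{p_1}+\frac{2}{p_2}$, which is the claim. No Hölder relation is used, consistent with the local operator not being scale invariant.

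To prove the pointwise bound, fix $x=(z,t)\in\mf{R}'_\delta$ and expand the group law:
\[
x\cdot\delta_{\sqrt2}(z_1,0)^{-1}=\big(z-\sqrt2 z_1,\ t-\tfrac{\sqrt2}{2}\Im(z\cdot\bar z_1)\big).
\]
Since $\|z\|<\delta$ and $\|z_1\|\leq1$, the last coordinate is $O(\delta)$. The first coordinate has norm $\sqrt2\|z_1\|+O(\delta)$. Hence, with a small absolute constant $c$, this point lies in $F_\delta$, possibly after replacing $\delta$ by a fixed multiple of $\delta$ in the definition of $F_\delta$, whenever $\big|\|z_1\|-\tfrac1{\sqrt2}\big|<c\delta$. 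On $S^{4n-1}$ this constraint gives $\|z_2\|^2=1-\|z_1\|^2$, so $\big|\|z_2\|-\tfrac1{\sqrt2}\big|\lesssim c\delta$ as well. The same computation then puts the $g$-argument in $F_\delta$. Thus both factors equal $1$ on
\[
\Omega_\delta:=\big\{(z_1,z_2)\in S^{4n-1}:\ \big|\|z_1\|^2-\tfrac12\big|<c\delta\big\},
\]
and $\mf{S}_{\sqrt2}(f,g)(x)\geq\sigma_{4n-1}(\Omega_\delta)$.

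It remains to show $\sigma_{4n-1}(\Omega_\delta)\simeq\delta$. I would use the slicing formula \eqref{slicing-formula} in polar form, exactly as in \eqref{slicing formula}. There the measure of $\{\|z_1\|=\rho\}$ has density $\rho^{2n-1}(1-\rho^2)^{n-1}\,d\rho$. This density is bounded above and below near $\rho=1/\sqrt2$, so a $\delta$-interval in $\rho$ has measure $\simeq\delta$.

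The step needing the most care is the bookkeeping of constants in the group-law term $\frac12\Im(z\cdot\bar z_1)$. One has to confirm that it is $O(\delta)$ uniformly on $\mf{R}'_\delta$ and does not eject the points from $F_\delta$. This is handled by using a fixed multiple of $\delta$ in the $s$-direction of $F_\delta$, which only changes implied constants. The remainder is a direct check; the scale is matched because the single radius $\sqrt2$ aligns both slices with the unit horizontal sphere simultaneously.
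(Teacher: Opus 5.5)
Your proposal is correct and is essentially the paper's argument. The paper takes radius $r=1$ with $f=g$ the indicator of a $\delta$-neighbourhood of the horizontal sphere of radius $1/\sqrt2$ (your example dilated by $\sqrt2$), the same test box $\{\|z\|<\delta,\,|t|<\delta\}$, and the same slicing count $\sigma_{4n-1}(\Omega_\delta)\simeq\delta$, arriving at $\delta^{1+d/p}\lesssim\delta^{2/p_1+2/p_2}$. Your explicit enlargement of $F_\delta$ by a fixed multiple of $\delta$ handles a constant that the paper passes over with $\lesssim$.
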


\begin{proof}
Let $\delta>0$ be sufficiently small and $Q_{\delta}$ denote the set $$\{(z, t): \big|\frac{1}{\sqrt{2}}-\|z\|\big|<\delta, |t|<\delta\}.$$ Let $f=g=\chi_{Q_{\delta}}.$ By slicing
\begin{align*}
&\mathfrak{S}(f,g)(x)\\
     &=\int_{B^{2n}(0,1)}f(x.(z_1, 0)^{-1})\cdot\\
     &\int_{S^{2n-1}} g(x. (\sqrt{1-\|z_1\|^2} z_{2}, 0)^{-1})\,(1-\|z_1\|^2)^{n-1} d\sigma_{2n-1}(z_2) dz_{1}\\
     &\geq \int_{\frac{1}{\sqrt{2}}<|z_1|<\frac{1}{\sqrt{2}}+\delta}f(x.(z_1, 0)^{-1})\cdot\\
     &\int_{S^{2n-1}} g(x. (\sqrt{1-\|z_1\|^2} z_{2}, 0)^{-1})\,(1-\|z_1\|^2)^{n-1} d\sigma_{2n-1}(z_2) dz_{1}.
\end{align*}
Denote $R_{\delta}:=\{x=(z, t): \|z\|<\delta, |t|<\delta\}.$ Then for $x=(z, t)\in R_{\delta},$ we observe that
\begin{align*}
\big|\|z-z_1\|-\frac{1}{\sqrt{2}}\big|&=\max\{\|z-z_1\|-\frac{1}{\sqrt{2}}, \frac{1}{\sqrt{2}}-\|z-z_1\|\}\\
&\leq \max\{\|z\|+\|z_1\|-\frac{1}{\sqrt{2}}, \frac{1}{\sqrt{2}}-\|z_1\|+\|z\|\}\lesssim \delta,  
\end{align*}
since $\frac{1}{\sqrt{2}}<\|z_1\|<\frac{1}{\sqrt{2}}+\delta$ and $\|z\|<\delta.$ Further, $\big|t-\frac{1}{2}\Im(z \bar{z_1})\big|\lesssim \delta,$ as $\|z\|, \|t\|<\delta.$ Therefore, $x. (z_1, 0)^{-1}\in Q_{\delta},$ thus $f(x. (z_1, 0)^{-1})=1.$ Further, observing that for $\frac{1}{\sqrt{2}}<\|z_1\|<\frac{1}{\sqrt{2}}+\delta$ we have $\frac{1}{\sqrt{2}}-\delta\leq \sqrt{1-\|z_1\|^2}\leq \frac{1}{\sqrt{2}},$ now performing similar computations as above one can show that $$x. (\sqrt{1-\|z_1\|^2} z_{2}, 0)^{-1}\in Q_{\delta}$$ and hence $g(x. (\sqrt{1-\|z_1\|^2} z_{2}, 0)^{-1})=1,$ for all $z_{2}\in S^{2n-1}.$

Combining above we obtain $\mathfrak{M}_{\text{loc}}(f, g)(x)\geq \delta$ for $x\in R_{\delta}.$ Therefore,
\begin{align*}
\delta \delta^{\frac{2n
+1}{p}}\lesssim \delta |R_{\delta}|^{\frac{1}{p}}\leq \|\mathfrak{M}_{\text{loc}}(f, g)\|_{L^p(\Ha)}\leq C \|f\|_{L^{p_{1}}(\Ha)} \|g\|_{L^{p_{2}}(\Ha)}\leq C \delta^{\frac{2}{p_1}+\frac{2}{p_2}}.    
\end{align*}
Letting $\delta\to 0^{+},$ we obtain $$\frac{2}{p_1}+\frac{2}{p_2}\leq 1+\frac{2n
+1}{p}.$$    
\end{proof}

\subsection*{Acknowledgements} AG gratefully acknowledges the support by the Industrial Consultancy and Sponsored Research (IC \& SR), Indian Institute of Technology Madras for the New Faculty Initiation Grant RF25261459MANFIG009296.

\end{document}